\newcommand{\R}{\mathbb{R}}
\newcommand{\C}{\mathbb{C}}
\newcommand{\N}{\mathbb{N}}
\newcommand{\bO}{{\bf 0}}
\newcommand{\bm}{{\bf m}}
\newcommand{\Bpis}{{\cal B}}
\newcommand{\Opis}{{\cal O}}
\newcommand{\Npis}{{\cal N}}
\newcommand{\Mpis}{{\cal M}}
\newcommand{\K}{\mathbb{K}}
\newcommand{\rank}{\mathop{\rm rank}\nolimits}
\newcommand{\ar}{\longrightarrow}
\newcommand{\inv}{^{-1}}
\newtheorem{theorem}{Theorem}[section]
\newtheorem{lemma}[theorem]{Lemma}
\newtheorem{cor}[theorem]{Corollary}
\newtheorem{prop}[theorem] {Proposition}
\newenvironment{ex}{\medskip \noindent {\bf Example.\ }}{\bigskip}
\newenvironment{proof}{\par\noindent \emph{Proof. }}{\hspace*{\fill}$\Box$\par\medskip}
\title{On the number of branches of real curve singularities\thanks{%
Aleksandra~Nowel and Zbigniew~Szafraniec\\
University of Gda\'{n}sk,
              Institute of Mathematics \\
              80-952 Gda\'{n}sk, Wita Stwosza 57\\
              Poland\\
              Tel.: +48-58-5232059\\
              Fax: +48-58-3414914\\
              Email: Aleksandra.Nowel@mat.ug.edu.pl\\
              Email: Zbigniew.Szafraniec@mat.ug.edu.pl\\ \\
2000 \emph{Mathematics Subject Classification} 14H20 (primary), 32B10, 58K65 (secondary)}
}
\author{Aleksandra~Nowel \and Zbigniew~Szafraniec}
\date{June 2010}
\begin{document}

\def\nothanksmarks{\def\thanks##1{\protect\footnotetext[0]{\kern-\bibindent##1}}}

\nothanksmarks

\maketitle

\pagestyle{fancy}

\lhead{\fancyplain{}{\textsc{\small A.~Nowel, Z.~Szafraniec}}}
\rhead{\fancyplain{}{\emph{\small On the number of branches of real curve singularities}}}


\begin{abstract}
There is presented a method for computing
the number of branches of a real analytic curve germ
$V(f_1,\ldots,f_m)\subset\R^n$ $(m\geq n)$ having a singular point
at the origin, and the number of half--branches of the set of double
points of an analytic germ $u:(\R^2,\bO)\rightarrow (\R^3,\bO)$.
\end{abstract}

\bigskip

Let $f_1,\ldots,f_m:\R^n,\bO\rightarrow \R,0$ be germs of real analytic functions,
and let $V=\{x\ | \ f_1(x)=\ldots=f_m(x)=0\}$
be the corresponding germ of an analytic set.
If $\dim V\leq 1$ then $V$ is locally a union of a finite collection of semianalytic half-branches.
Let $b_0$ denote the number of half-branches.

Several authors presented algebraic methods for computing $b_0$. The case
of planar curves has been investigated by Cucker {\em et al.} \cite{cuckeretal}.
In the case where $m=n-1$ and $V$ is a complete intersection
Fukuda {\em et al.} \cite{aokietal1}, \cite{aokietal2}, \cite{fukudaetal} have given a formula for $b_0$.
They have associated to $f_1,\ldots, f_{n-1}$ a map germ $\R^n,\bO\rightarrow \R^n,\bO$ whose 
topological degree equals $\frac{1}{2}b_0$. Then one may use the Eisenbud-Levine \cite{eisenbudlevine} 
and Khimshiashvili \cite{khimshiashvili1}, \cite{khimshiashvili2} theorem to calculate
the degree as the signature of a quadratic form on the corresponding local algebra.

This has been extended by Montaldi and van Straten \cite{montaldivanstraten}
to the more general case where the curve is not a complete intersection.
An analytic 1-form $\alpha$ defines an orientation on each half-branch.
They have shown that one may associate to $V$ and $\alpha$
so called "ramification modules" together with real valued non-degenerate
quadratic forms whose signatures determine the number of "outbound" and "inbound"
half-branches. In particular, if $\alpha=\sum x_i\, dx_i$ then every half-branch is "outbound",
and so $b_0$ can be expressed as the signature of the appropriate quadratic form.

Damon \cite{damon1}  has applied the method of Montaldi-van Straten so as to give a very effective
formula for $b_0$ in the case of a weighted homogeneous curve singularity. He has also
extended this result to the $G$-equivariant case \cite{damon2}. 

There exist efficient computer programs which may compute the local topological degree
using the Eisenbud-Levine and Khimshiashvili method
(see \cite{leckiszafraniec1}, \cite{leckiszafraniec3}).
On the other hand, often one may need to compute $b_0$ if $m>n-1$, so that $V$ is not
a complete intersection.

Assume that $m>n-1$ and the origin is isolated in the set of $z\in\C^n$ such that
$f_1(z)=0,\ldots, f_m(z)=0$ and the rank of the derivative $D(f_1,\ldots,f_m)(z)<n-1$.
In this paper we shall show how to construct two mappings $H_\pm:\R^n,\bO\rightarrow \R^n,\bO$
such that $b_0$ is the difference of the local degrees $\deg_0(H_+)$ and $\deg_0(H_-)$.

The paper is organized as follows. In Section 1 and Section 2 we have collected
some useful facts about ideals in the ring of convergent power series, and about
initial diagrams of these ideals. In Section 3 we prove that one may find germs
$g_1,\ldots,g_{n-1},h$ which are linear combination of $f_1,\ldots, f_m$ such that
$V=\{x\ | \ g_1(x)=\ldots=g_{n-1}(x)=h(x)=0\}$.
In Section 4 we apply results of \cite{szafraniec14} so as to prove the main result, 
and we compute some simple examples. 

Mappings $u:(\R^2,\bO)\rightarrow (\R^3,\bO)$ are a natural object of study in the theory of singularities
(see \cite{mond1}, \cite{mond2}, for a recent account we refer the reader to 
\cite{mararnunoballesteros}). In Section 5  we show how to verify that $u$ has only transverse double points
(which appear along a curve $D^2(u)$ in $\R^3$), and no triple points. 
We shall also show how to compute the number of half--branches in $D^2(u)$.

In this paper we present several examples computed by a computer. We have implemented our algorithm
with the help of {\sc Singular} \cite{singular}. We have also used a computer program written by
Andrzej {\L}\c{e}cki \cite{leckiszafraniec1}.

\section{Preliminaries}
Let $\K$ denote either $\R$ or $\C$.
For $p\in\K^n$, let $\Opis_{K,p}$ denote the ring of germs
at $p$ of analytic functions $\K^n\rightarrow\K$.
If $I_p$ is an ideal in $\Opis_{K,p}$, let $V_K(I_p)\subset\K^n$ denote
the germ of zeros of $I_p$ at $p$. Let $\bm_0=\{f\in\Opis_{K,0}\ |\ f(\bO)=0\}$ denote the maximal ideal in $\Opis_{K,0}$.

Let $U\subset\C^n$ be an open neighbourhood of the origin.
Suppose that functions $f_1,\ldots, f_s$ and $g_1,\ldots,g_t$ are holomorphic in $U$.
For $p\in U$, let $I_p=\left<f_1,\ldots,f_s\right>$ (resp. $J_p=\left<g_1,\ldots,g_t\right>$) denote the ideal in $\Opis_{C,p}$ generated by $f_1,\ldots,f_s$
(resp. by $g_1,\ldots , g_t$). 

\begin{prop}\label{nr1}
There exists $k$ such that $\bm_0^k I_0\subset J_0$ if and only if there is an open neighbourhood $W$ of the origin
such that $I_p\subset J_p$ for $p\in W\setminus\{\bO\}$.

If that is the case and $\bO\in V_C(I_0)$ then $V_C(J_0)\subset V_C(I_0)$.
\end{prop}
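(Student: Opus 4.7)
The plan is to split the equivalence into its two implications, handling them by distinct techniques — an explicit division argument on one side, coherence plus the analytic Nullstellensatz on the other — and then to read off the second assertion from the algebraic inclusion that the equivalence delivers. For the direction $\Rightarrow$, I would fix generators and, using $\bm_0^k I_0\subset J_0$, write for every $i$ and every coordinate index $j$
\[
x_j^k f_i \;=\; \sum_l h_{i,j,l}\, g_l
\]
with germs $h_{i,j,l}\in\Opis_{C,0}$. Since there are only finitely many such identities, they all hold simultaneously on some common neighbourhood $W$ of $\bO$. At any $p\in W\setminus\{\bO\}$ some coordinate $p_j$ is nonzero, so $x_j^k$ is a unit in $\Opis_{C,p}$; dividing gives $f_i\in J_p$ for each $i$, hence $I_p\subset J_p$.

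For the converse $\Leftarrow$ I would form the finitely generated $\Opis_{C,0}$-module $M_0=(I_0+J_0)/J_0$. The hypothesis $I_p\subset J_p$ for $p\in W\setminus\{\bO\}$ says precisely that the stalks of the coherent sheaf $M$ vanish on $W\setminus\{\bO\}$, so the germ of $\operatorname{supp}M$ at $\bO$ equals $\{\bO\}$. Applying Rückert's analytic Nullstellensatz to $\operatorname{Ann}_{\Opis_{C,0}}(M_0)$ gives $\sqrt{\operatorname{Ann}M_0}=\bm_0$, and Noetherianity of $\Opis_{C,0}$ then yields $\bm_0^k\cdot M_0=0$ for some $k$, which is the desired inclusion $\bm_0^k I_0\subset J_0$.

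The final assertion is immediate from the algebraic inclusion: one has
\[
V_C(J_0)\subset V_C(\bm_0^k I_0)=V_C(I_0)\cup V_C(\bm_0^k)=V_C(I_0)\cup\{\bO\},
\]
and the hypothesis $\bO\in V_C(I_0)$ absorbs the extra point, giving $V_C(J_0)\subset V_C(I_0)$. The main obstacle is the converse implication, where one must upgrade a pointwise, punctured inclusion of ideals to an honest algebraic statement about $I_0$ and $J_0$: this is precisely where both the coherence of $M$ (to translate the hypothesis on $p$ into vanishing of stalks $M_p$) and Rückert's Nullstellensatz (to pass from set-theoretic support at $\{\bO\}$ to a power-of-$\bm_0$ annihilator) are indispensable. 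By contrast, the forward direction reduces to the one-line division trick and the set-theoretic conclusion is a routine computation.
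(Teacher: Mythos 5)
Your proof is correct. The forward implication and the final set-theoretic conclusion are word-for-word the paper's argument, and the backward implication rests on the same two pillars as the paper's — Oka coherence and R\"uckert's Nullstellensatz — but you package them differently. The paper fixes a generator $f_i$ of $I_0$, introduces the relations module ${\cal R}_i(p)$ and the transporter ideal ${\cal A}_i(p)=\{a\mid af_i\in J_p\}$, invokes coherence of ${\cal R}_i$ to get generators $a_\ell$ of ${\cal A}_i$ valid near $\bO$, reads off $V_C(\{a_\ell\})\subset\{\bO\}$ from the hypothesis, and applies the Nullstellensatz separately for each $i$ to get $\bm_0^{k(i)}f_i\subset J_0$, then takes $k=\max k(i)$. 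You instead work with the single coherent sheaf $M=(I+J)/J$ and apply the Nullstellensatz once to $\operatorname{Ann}(M_0)=\bigcap_i{\cal A}_i(\bO)$, using the identity $\operatorname{supp}M=V(\operatorname{Ann}M_0)$ for coherent sheaves. This is more compact and avoids the explicit detour through the relations module, at the cost of invoking a couple of standard coherence facts (coherence of quotients, stalk of the generated ideal sheaf, $\operatorname{supp}=V(\operatorname{Ann})$) without proof; the paper's generator-by-generator version is closer to the bare Oka theorem. Both are complete and correct.
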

\begin{proof}  ($\Rightarrow$) Let $(z_1,\ldots,z_n)$ denote the coordinates in $\C^n$.
There exists an open neighbourhood $W$ of the origin such that each
function $z_j^kf_i$ is on $W$ a combination of $g_1,\ldots,g_t$ with holomorphic coefficients.
If $p=(p_1,\ldots,p_n)\in W\setminus \{\bO\}$ then some $p_j\neq 0$. As $z_j^k$ is invertible
in $\Opis_{C,p}$, so all $f_i\in J_p$, and then $I_p\subset J_p$.\\[1em]
($\Leftarrow$)  For $p\in U$ let
\[{\cal R}_i(p)=\{(a,h_1,\ldots,h_t)\in(\Opis_{C,p})^{t+1}\ |\ af_i=h_1g_1+\cdots +h_tg_t\}.\]
As $\Opis_{C,p}$ is a noetherian ring, ${\cal R}_i(p)$ is a finitely generated $\Opis_{C,p}$-module, and
there exists a finite family
$\{b_\ell=(a_\ell,h_{\ell 1},\ldots h_{\ell t})\}$ of generators of ${\cal R}_i(\bO)$.
Of course,
\[{\cal A}_i(p)=\{a\in\Opis_{C,p}\ |\ \exists\ h_1,\ldots,h_t\in\Opis_{C,p}\ :\ (a,h_1,\ldots,h_t)\in
{\cal R}_i(p)\}\]
is an ideal in $\Opis_{C,p}$, and $a\in{\cal A}_i(p)$ if and only if $af_i\in J_p$.

By the Oka coherence theorem, there exists an open $W$ such that
$\bO\in W\subset U$,
representatives of $\{b_\ell\}$ are defined and holomorphic in $W$, and for each $p\in W$
the germs of $\{b_\ell\}$ at $p$ generate ${\cal R}_i(p)$.
In particular, the germs of $\{a_\ell\}$ at $p$ generate ${\cal A}_i(p)$.
If $p\in W\setminus\{\bO\}$ lies sufficiently close
to the origin then $I_p\subset J_p$.
Hence $1\cdot f_i=f_i\in I_p\subset J_p$, and then $1\in {\cal A}_i(p)$.
Thus some $a_\ell(p)\neq 0$. Hence $ V_C(\{a_\ell\})\subset\{\bO\}$. 

The germs $\{a_\ell\}$ at $\bO$ generate ${\cal A}_i(\bO)$.
By R\"uckert's local Nullstellensatz, there exists $k(i)$ with
$\bm_0^{k(i)}\subset{\cal A}_i(\bO)$, and then
$\bm_0^{k(i)}\left< f_i\right>\subset J_0$.
Let $k=\max(k(1),\ldots,k(s))$. As $f_1,\ldots, f_s$
generate $I_0$, then $\bm_0^kI_0\subset J_0$.

If that is the case and $\bO\in V_C(I_0)$, then
$V_C(J_0)\subset V_C(\bm_0^k)\cup V_C(I_0)=\{\bO\}\cup V_C(I_0)=V_C(I_0)$. \end{proof}

\begin{cor}\label{nr2}
There exists $k$ such that $\bm_0^k I_0\subset J_0$ and $\bm_0^k J_0\subset I_0$ 
if and only if there is an open neighbourhood
$W$ of the origin such that $I_p=J_p$ for $p\in W\setminus\{\bO\}$.

If that is the case and $\bO\in V_C(I_0)\cap V_C(J_0)$ then $V_C(I_0)=V_C(J_0)$.
\end{cor}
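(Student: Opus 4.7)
The plan is simply to apply Proposition \ref{nr1} twice, swapping the roles of the ideals $I_0$ and $J_0$, since the hypotheses of the corollary are the symmetric version of the proposition's hypotheses.

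For the ($\Rightarrow$) direction, I would assume $\bm_0^k I_0 \subset J_0$ and $\bm_0^k J_0 \subset I_0$. Applying Proposition \ref{nr1} to the pair $(I_0, J_0)$ gives a neighbourhood $W_1$ of $\bO$ with $I_p \subset J_p$ for $p \in W_1 \setminus \{\bO\}$, and applying it to the pair $(J_0, I_0)$ gives $W_2$ with $J_p \subset I_p$ for $p \in W_2 \setminus \{\bO\}$. Then $W = W_1 \cap W_2$ satisfies $I_p = J_p$ for all $p \in W \setminus \{\bO\}$.

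For the ($\Leftarrow$) direction, I would assume $I_p = J_p$ on $W \setminus \{\bO\}$ for some open $W$. This equality splits into the two containments $I_p \subset J_p$ and $J_p \subset I_p$. Proposition \ref{nr1} applied in each direction yields exponents $k_1, k_2$ with $\bm_0^{k_1} I_0 \subset J_0$ and $\bm_0^{k_2} J_0 \subset I_0$. Taking $k = \max(k_1,k_2)$ gives the required single exponent.

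Finally, for the statement about zero sets, if $\bO \in V_C(I_0) \cap V_C(J_0)$, then the second assertion of Proposition \ref{nr1} applied to both containments yields $V_C(J_0) \subset V_C(I_0)$ and $V_C(I_0) \subset V_C(J_0)$, hence equality.

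There is no real obstacle here; the corollary is formally symmetric and reduces verbatim to two invocations of the preceding proposition. The only mild care needed is to intersect the two neighbourhoods obtained in the ($\Rightarrow$) direction and to take the maximum of the two exponents in the ($\Leftarrow$) direction.
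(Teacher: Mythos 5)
Your proof is correct and is exactly the intended argument: the paper states Corollary~\ref{nr2} without proof precisely because it follows from two symmetric applications of Proposition~\ref{nr1}, together with the routine steps of intersecting the two neighbourhoods and taking the maximum of the two exponents, just as you wrote.
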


\section{Diagrams of initial exponents}
In this section we present some properties of diagrams
of initial exponents.
In exposition and notation we follow closely  \cite{bierstonemilman}.

Let $\N$ denote the nonnegative integers.

If $\alpha=(\alpha_1,\ldots,\alpha_n)\in\N^n$, put
$|\alpha|=\alpha_1+\cdots+\alpha_n$. We order the
$(n+1)$--tuples $(\alpha_1,\ldots,\alpha_n,|\alpha|)$
lexicographically from the right. This induces a total
ordering of $\N^n$.

Let $0\neq f=\sum a_\alpha x^\alpha\in\Opis_{K,0}$, where $\alpha\in\N^n$,
$a_\alpha\in\K$, and
$x^\alpha=x_1^{\alpha_1}\cdots x_n^{\alpha_n}$. Denote
\[\operatorname{supp}(f)=\{\alpha\in\N^n\ |\ a_\alpha\neq 0\}.\]
Let $\nu(f)$ denote the smallest element of $\operatorname{supp}(f)$,
and let $\operatorname{in}(f)$ denote $a_{\nu(f)}x^{\nu(f)}$.
It is easy to verify that
$\nu(f_1\cdot f_2)=\nu(f_1)+\nu(f_2)$, 
$\operatorname{in}(f_1\cdot f_2)=\operatorname{in}(f_1)\operatorname{in}(f_2)$, and
$\nu(\sum f_i)=\min(\{\nu(f_i)\})$ if $\nu(f_i)$ are pairwise distinct.

We define the diagram of initial exponents $\Npis(I)$ of an ideal
$I\subset\Opis_{K,0}$ as
$\{\nu(f)\ |\ f\in I\setminus\{0\} \}$. Clearly, $\Npis(I)+\N^n=\Npis(I)$.
There is a smallest finite subset ${\cal B}(I)$ of $\Npis(I)$
such that $\Npis(I)={\cal B}(I)+\N^n$.

If $I\subset J$ are ideals then $\Npis(I)\subset\Npis(J)$.
There exist $g^1,\ldots,g^s\in I$ and $g^{s+1},\ldots, g^t\in J$ such that
${\cal B}(I)=\{\nu(g^1),\ldots,\nu(g^s)\}$ and
${\cal B}(J)\setminus\Npis(I)=\{\nu(g^{s+1}),\ldots,\nu(g^t)\}$.
We associate to $g^1,\ldots,g^t$ the following decomposition of $\N^n$:
\[\Delta_1=\nu(g^1)+\N^n,\]
\[\Delta_i=(\nu(g^i)+\N^n)\setminus\bigcup_{k=1}^{i-1}\Delta_k,\ \ i=2,\ldots,t,\]
\[\Delta=\N^n\setminus\bigcup_{k=1}^t\Delta_k=\N^n\setminus\Npis(J).\]
As $\Npis(I)=\bigcup_{i=1}^s \Delta_i$ and $\Npis(J)=\bigcup_{i=1}^t\Delta_i$, then
$\Npis(J)\setminus\Npis(I)=\bigcup_{i=s+1}^t\Delta_i$.

\begin{theorem}[ \cite{arocaetal}, \cite{bierstonemilman}, \cite{grauert}]\label{nr3}
For every $f\in\Opis_{K,0}$ there exist unique $q_i\in\Opis_{K,0}$, $i=1,\ldots, t$ and
$r\in\Opis_{K,0}$ such that
\[\nu(g^i)+\operatorname{supp}(q_i)\subset\Delta_i,\ \ i=1,\ldots,t,\]
\[\operatorname{supp}(r)\subset\Delta,\]
and $f=\sum_{i=1}^t q_i g^i+r$.  
\end{theorem}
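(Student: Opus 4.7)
For uniqueness, suppose $\sum q_i g^i + r = \sum q_i' g^i + r'$ both satisfy the support conditions; set $Q_i = q_i - q_i'$ and $R = r - r'$, and note that the support conditions pass to differences (since $\operatorname{supp}(Q_i) \subset \operatorname{supp}(q_i) \cup \operatorname{supp}(q_i')$). For any nonzero $Q_i$ one has $\nu(Q_i g^i) = \nu(Q_i) + \nu(g^i) \in \nu(g^i) + \operatorname{supp}(Q_i) \subset \Delta_i$, and if $R \neq 0$ then $\nu(R) \in \Delta$. Because the pieces $\Delta, \Delta_1, \ldots, \Delta_t$ are pairwise disjoint, these initial exponents are pairwise distinct; hence $\nu(\sum Q_i g^i + R)$ equals the smallest of them, contradicting $\sum Q_i g^i + R = 0$ unless every $Q_i$ and $R$ vanish.

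For existence, I would build the $q_i$ and $r$ coefficient by coefficient by induction on $\alpha \in \N^n$ in the given well-ordering (which refines total degree). At stage $\alpha$ the shortfall $s_\alpha := (\text{coefficient of } x^\alpha \text{ in } f - \sum q_i g^i - r)$ depends only on coefficients already fixed at stages $\beta < \alpha$, because any $x^\beta$-coefficient of $q_i$ contributes to the $x^\alpha$-coefficient of $q_i g^i$ only through a term $x^{\alpha - \beta} \in \operatorname{supp}(g^i)$, forcing $\alpha - \beta \geq \nu(g^i)$ and hence $\beta \leq \alpha - \nu(g^i) < \alpha$ (the only case of interest being $\nu(g^i) \neq \bO$; otherwise $g^i$ is a unit and the problem trivializes). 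If $\alpha \in \Delta$, define the coefficient of $x^\alpha$ in $r$ to be $s_\alpha$; if $\alpha \in \Delta_i$, put $\beta = \alpha - \nu(g^i) \in \N^n$ and define the coefficient of $x^\beta$ in $q_i$ to be $s_\alpha / a_{\nu(g^i)}$. The support conditions hold by construction, and the formal identity $f = \sum q_i g^i + r$ holds coefficient-wise.

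The main difficulty is showing that the formally constructed $q_i$ and $r$ actually converge, i.e.\ represent elements of $\Opis_{K,0}$ rather than merely of $\K[[x_1,\ldots,x_n]]$. This is the content of the Hironaka--Grauert division theorem: one introduces a weighted norm $\|f\|_\rho = \sum |a_\alpha|\rho^{|\alpha|}$ on power series convergent on a polydisc of radius $\rho$ and verifies that for $\rho$ sufficiently small the division map $f \mapsto (q_1, \ldots, q_t, r)$ is bounded with respect to this norm. Rather than redo this estimate I would cite \cite{arocaetal}, \cite{bierstonemilman}, \cite{grauert}; since the formal argument gives uniqueness in $\K[[x_1,\ldots,x_n]]$ and $\Opis_{K,0}$ embeds into the formal power series ring, uniqueness in $\Opis_{K,0}$ is then automatic.
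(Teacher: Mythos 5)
The paper states this theorem with citations to \cite{arocaetal}, \cite{bierstonemilman}, \cite{grauert} and does not prove it, so there is no in-paper argument to compare against. Your sketch follows the standard proof of the Grauert--Hironaka division theorem and is essentially sound: the uniqueness argument via pairwise disjointness of $\Delta,\Delta_1,\ldots,\Delta_t$ and the fact that $\nu(\sum f_i)=\min\nu(f_i)$ when the $\nu(f_i)$ are pairwise distinct is exactly right; the coefficient-by-coefficient construction in the well-ordering is the correct formal existence argument; and you correctly identify the convergence of the formally constructed $q_i,r$ as the genuine analytic content of the theorem, which the cited references supply. One imprecision in the existence step is worth fixing. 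When $\alpha\in\Delta_i$, the coefficient of $q_i$ at $x^{\alpha-\nu(g^i)}$ is being set at stage $\alpha$ itself, and it \emph{does} contribute to the $x^\alpha$-coefficient of $q_ig^i$ (through $\operatorname{in}(g^i)$); so it is not literally true that $s_\alpha$ ``depends only on coefficients fixed at stages $\beta<\alpha$.'' The correct statement is that, by translation-invariance of the ordering and disjointness of the $\Delta_j$, the stage-$\alpha$ assignment is the unique not-yet-fixed coefficient entering the $x^\alpha$-coefficient of $f-\sum q_jg^j-r$, and it enters linearly with nonzero factor $a_{\nu(g^i)}$, so one can solve for it; your inequality $\beta\le\alpha-\nu(g^i)<\alpha$ bounds the \emph{exponents} of $q_i$ involved, not the induction stages at which those coefficients are fixed (those are $\beta+\nu(g^i)\le\alpha$). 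In particular the parenthetical about $\nu(g^i)=\bO$ is not the relevant caveat.
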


\begin{prop}\label{nr4}
$f\in I$ if and only if $q_{s+1}=\cdots =q_t=r=0$.
\end{prop}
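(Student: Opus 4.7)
The plan is to treat the reverse implication as immediate and then obtain the forward one by invoking the uniqueness part of Theorem \ref{nr3}. If $q_{s+1}=\cdots=q_t=r=0$, then $f=\sum_{i=1}^s q_i g^i$ lies in $I$ because $g^1,\ldots,g^s\in I$. So the substance is in showing that if $f\in I$, then the unique representation furnished by the theorem has the auxiliary coefficients and the remainder all equal to zero.

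For the forward direction, I would first apply Theorem \ref{nr3} to the ideal $I$ alone, using only the generators $g^1,\ldots,g^s$. The associated decomposition of $\N^n$ is $\Delta_1,\ldots,\Delta_s,\Delta'$, where $\Delta'=\N^n\setminus\Npis(I)$; this is the same partition as in the statement up to index $s$, because each $\Delta_i$ is defined using only $\nu(g^1),\ldots,\nu(g^i)$, and $\Npis(I)=\bigcup_{i=1}^s\Delta_i$. The theorem then yields unique $q'_1,\ldots,q'_s,r'\in\Opis_{K,0}$ satisfying $\nu(g^i)+\operatorname{supp}(q'_i)\subset\Delta_i$, $\operatorname{supp}(r')\subset\Delta'$, and $f=\sum_{i=1}^s q'_i g^i+r'$.

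Next I would argue that $r'=0$. Indeed $r'=f-\sum_{i=1}^s q'_i g^i$ belongs to $I$ since $f\in I$ and each $g^i\in I$; hence if $r'\neq 0$ then $\nu(r')\in\Npis(I)$, which contradicts $\nu(r')\in\operatorname{supp}(r')\subset\Delta'=\N^n\setminus\Npis(I)$. Thus $r'=0$ and $f=\sum_{i=1}^s q'_i g^i$.

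Finally, I would set $q_i:=q'_i$ for $1\le i\le s$, $q_i:=0$ for $s<i\le t$, and $r:=0$. Then $f=\sum_{i=1}^t q_ig^i+r$ trivially satisfies all the support conditions required by Theorem \ref{nr3}. By the uniqueness asserted there, these must be precisely the $q_i$ and $r$ appearing in the statement, and so $q_{s+1}=\cdots=q_t=r=0$. There is no serious obstacle; the only point demanding care is keeping straight that $\Delta_1,\ldots,\Delta_s$ are intrinsic to $g^1,\ldots,g^s$ and do not change when the further generators $g^{s+1},\ldots,g^t$ are appended, so that the two instances of Theorem \ref{nr3} can be matched.
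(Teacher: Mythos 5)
Your proof is correct, but it takes a somewhat different route from the paper's. The paper applies Theorem \ref{nr3} only once, to the full system $g^1,\ldots,g^t$, and argues by contradiction: setting $h=\sum_{s+1}^t q_ig^i+r=f-\sum_{1}^{s}q_ig^i$, it notes that the initial exponents $\nu(q_ig^i)\in\Delta_i$ (for $s+1\le i\le t$) and $\nu(r)\in\Delta$ lie in pairwise disjoint regions, so their initial terms cannot cancel; hence $h\neq 0$ and $\nu(h)\in\N^n\setminus\Npis(I)$, contradicting $h\in I$. You instead apply the division theorem twice --- once to $I$ with the generators $g^1,\ldots,g^s$ (the case $J=I$, $t=s$ of Theorem \ref{nr3}), and once to the full system --- and match the two expansions by uniqueness after padding with zeros. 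This buys you a simpler contradiction (only the remainder $r'$ needs to be examined: $\operatorname{supp}(r')\subset\N^n\setminus\Npis(I)$ is incompatible with $0\neq r'\in I$), at the cost of invoking the uniqueness clause a second time and of checking, as you correctly do, that the cells $\Delta_1,\ldots,\Delta_s$ are unchanged when $g^{s+1},\ldots,g^t$ are appended. Both arguments ultimately rest on the same basic fact, namely that a nonzero element of $I$ has its initial exponent in $\Npis(I)$; your version has the side benefit of isolating the membership criterion ``$f\in I$ iff the remainder on division by $g^1,\ldots,g^s$ vanishes'', which is the analogue for $I$ of Proposition \ref{nr5}.
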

\begin{proof}  ($\Leftarrow$) is obvious.\\[1em]
($\Rightarrow$) As $q_i$ and $r$ are unique, if $f=0$ then $r=0$ and all $q^i=0$.

Suppose, contrary to our claim, that $f\in I\setminus\{0\}$ and either $r\neq 0$ or $q_i\neq 0$
for some $s+1\leq i\leq t$. Put
\[h=\sum_{s+1}^tq_i g^i+r=f-\sum_1^s q_i g^i\ .\]

By Theorem \ref{nr3},
$\nu(q_i g^i)=\nu(g^i)+\nu(q_i)\in\Delta_i$,
and $\nu(r)\in\Delta$.
Thus $\operatorname{in}(h)$ is $\operatorname{in}(r)$ or one of the
$\operatorname{in}(q_i g^i)=\operatorname{in}(q_i)\operatorname{in}(g^i)$,
where $s+1\leq i\leq t$, since their exponents $\nu(r)$ and
$\nu(q_i g^i)$ lie in disjoint regions of $\N^n$. 
Hence $h\neq 0$, and
\[\nu(h)\in\Delta\cup \bigcup_{s+1}^t\Delta_i=\N^n\setminus\Npis(I).\]
On the other hand, $h=f-\sum_1^s q_i g^i\in I$ and then
$\nu(h)\in\Npis(I)$, a contradiction. \end{proof} 

Applying similar arguments one may prove

\begin{prop}\label{nr5}
$f\in J$ if and only if $r=0$. 
\end{prop}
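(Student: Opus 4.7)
The plan is to imitate the proof of Proposition \ref{nr4}, only replacing $I$ by $J$ throughout. The key observation is that \emph{all} of $g^1,\ldots,g^t$ lie in $J$ (the first $s$ because $I\subset J$, the rest by choice), so any combination $\sum_{i=1}^t q_i g^i$ automatically belongs to $J$. This changes where the contradictory exponent has to live.

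The ($\Leftarrow$) direction is then immediate: if $r=0$, Theorem \ref{nr3} gives $f=\sum_{i=1}^t q_i g^i\in J$.

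For ($\Rightarrow$), I would assume $f\in J$ and argue by contradiction that $r\neq 0$ is impossible. Set
\[ h \;=\; f-\sum_{i=1}^t q_i g^i \;=\; r. \]
Since $f\in J$ and every $g^i\in J$, we get $h\in J$; on the other hand, by Theorem \ref{nr3} we have $\operatorname{supp}(r)\subset\Delta=\N^n\setminus\Npis(J)$. If $r\neq 0$, then $\nu(h)=\nu(r)\in\Delta$, so $\nu(h)\notin\Npis(J)$; but $h\in J\setminus\{0\}$ forces $\nu(h)\in\Npis(J)$ by the very definition of $\Npis(J)$, a contradiction. Hence $r=0$.

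There is no real obstacle here; the only point to be a little careful about is recording why each summand of $\sum_{i=1}^t q_i g^i$ lies in $J$ (for $i\leq s$ this uses the hypothesis $I\subset J$, not just $g^i\in I$). Unlike Proposition \ref{nr4}, we do not need to worry separately about the $q_i g^i$ with $i\geq s+1$, because in the present statement they are allowed to be nonzero; only $r$ must vanish, and that is exactly what the disjointness of $\Delta$ from $\Npis(J)$ delivers.
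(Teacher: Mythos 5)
Your proof is correct and is essentially the argument the paper has in mind when it says ``applying similar arguments one may prove'' Proposition~\ref{nr5}. You rightly observe that the argument actually simplifies compared to Proposition~\ref{nr4}: since all $g^1,\ldots,g^t$ lie in $J$, the auxiliary element $h=f-\sum_{i=1}^t q_i g^i$ is just $r$ itself, so the only thing left to check is that a nonzero $r$ would put $\nu(r)\in\Delta=\N^n\setminus\Npis(J)$ while simultaneously forcing $\nu(r)\in\Npis(J)$ because $r\in J\setminus\{0\}$.
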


\begin{prop}\label{nr6}
If $f\in J$ then its residue class in $J/I$ is uniquely represented by
$\sum_{s+1}^t q_i g^i$, where $\nu(g^i)+\operatorname{supp}(q_i)\subset\Delta_i$.
\end{prop}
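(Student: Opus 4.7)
The plan is to get the representation for free from Theorem \ref{nr3} and Proposition \ref{nr5}, then deduce uniqueness from Proposition \ref{nr4}. Existence will be the easy direction, uniqueness the substantive one, though both are short.

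First I would invoke Theorem \ref{nr3} to write $f = \sum_{i=1}^{t} q_i g^i + r$ with $\nu(g^i) + \operatorname{supp}(q_i) \subset \Delta_i$ and $\operatorname{supp}(r) \subset \Delta$. Since the hypothesis is $f \in J$, Proposition \ref{nr5} forces $r = 0$. Splitting the sum at $i=s$, I observe that $g^1,\ldots,g^s \in I$, so the chunk $\sum_{i=1}^{s} q_i g^i$ belongs to $I$. Therefore, modulo $I$, the class of $f$ is represented by $\sum_{i=s+1}^{t} q_i g^i$, a combination of exactly the promised form.

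For uniqueness, I would suppose that $\sum_{i=s+1}^{t} q_i g^i$ and $\sum_{i=s+1}^{t} q_i' g^i$ represent the same class in $J/I$, both with $\nu(g^i) + \operatorname{supp}(q_i), \nu(g^i)+\operatorname{supp}(q_i') \subset \Delta_i$. Their difference $h = \sum_{i=s+1}^{t} (q_i - q_i') g^i$ then lies in $I$. Now $h$ itself is already in the form given by Theorem \ref{nr3} relative to the system $g^1,\ldots,g^t$: take the first $s$ quotients and the remainder to be zero, and take the last $t-s$ quotients to be $q_i - q_i'$, whose supports still satisfy the $\Delta_i$ condition since $\operatorname{supp}(q_i - q_i') \subset \operatorname{supp}(q_i) \cup \operatorname{supp}(q_i')$. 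By the uniqueness half of Theorem \ref{nr3}, these are the $q_i$ and $r$ produced by the theorem for $h$.

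Finally, since $h \in I$, Proposition \ref{nr4} tells us that all $q_i$ for $i \geq s+1$, as well as $r$, must vanish. Hence $q_i - q_i' = 0$ for every $i = s+1,\ldots,t$, which is the required uniqueness. The only point requiring some care is verifying that the expression $\sum_{i=s+1}^{t}(q_i-q_i')g^i$ genuinely fits the uniqueness clause of Theorem \ref{nr3} (i.e.\ that the supports of $q_i - q_i'$ still land in $\Delta_i$), but this is immediate from the support conditions on $q_i$ and $q_i'$ individually.
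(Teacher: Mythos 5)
Your proof is correct and takes essentially the same route as the paper: apply Theorem \ref{nr3} and Proposition \ref{nr5} to get $r=0$, note the first $s$ terms lie in $I$, then invoke Proposition \ref{nr4} for uniqueness. The paper leaves the uniqueness step as a one-line citation, whereas you correctly unpack it by forming the difference, checking it satisfies the support constraints, and appealing to the uniqueness clause of Theorem \ref{nr3}; this is the right reading of the paper's terse argument.
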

\begin{proof}  By the above proposition, $r=0$, and then
\[f=\sum_{i=1}^t q_i g^i=\sum_{i=1}^s q_i g^i +\sum_{i=s+1}^t q_i g^i\ .\]
Since $\sum_1^s q_i g^i\in I$, $f=\sum_{s+1}^t q_i g^i$ in $J/I$.
Proposition \ref{nr4}  implies uniqueness. \end{proof} 

\begin{cor}\label{nr7}
If $I\subset J$ are ideals in $\Opis_{K,0}$, then
\[\dim_K(J/I)=\sum_{i=s+1}^t \# \Delta_i=\# (\Npis(J)\setminus\Npis(I)).\]
\end{cor}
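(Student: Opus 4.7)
The plan is to exhibit an explicit $K$-basis of $J/I$ of cardinality $\sum_{i=s+1}^t \#\Delta_i$. The second equality in the statement is immediate from Section~2: the sets $\Delta_{s+1},\ldots,\Delta_t$ are pairwise disjoint by construction, and their union equals $\Npis(J)\setminus\Npis(I)$, so $\#(\Npis(J)\setminus\Npis(I)) = \sum_{i=s+1}^t \#\Delta_i$.

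The candidate basis of $J/I$ is
\[
\mathcal{F} = \bigl\{\,[g^{i} \cdot x^{\beta - \nu(g^{i})}]\ :\ s+1 \leq i \leq t,\ \beta \in \Delta_i\,\bigr\},
\]
which has cardinality $\sum_{i=s+1}^t \#\Delta_i$ (the translation $\beta \mapsto \beta - \nu(g^i)$ is a bijection since $\Delta_i \subset \nu(g^i)+\N^n$). For linear independence, I would suppose a finite $K$-linear combination $\sum c_{i,\beta}\, g^{i} x^{\beta - \nu(g^{i})}$ lies in $I$; grouping by $i$ this becomes $\sum_{i=s+1}^t q_i g^i$ with $q_i = \sum_\beta c_{i,\beta}\, x^{\beta - \nu(g^{i})}$ a polynomial satisfying $\nu(g^i) + \operatorname{supp}(q_i) \subset \Delta_i$. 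Proposition~\ref{nr4} applied to this element of $I$ (with $q_1 = \cdots = q_s = 0$ and $r = 0$) forces each $q_i = 0$, hence all $c_{i,\beta} = 0$.

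For spanning I would split into cases. When each $\Delta_i$ with $i > s$ is finite, Proposition~\ref{nr6} represents every class $[f] \in J/I$ uniquely as $\sum_{i=s+1}^t q_i g^i$ with $\nu(g^i) + \operatorname{supp}(q_i) \subset \Delta_i$; finiteness of $\Delta_i$ then forces $q_i$ to be a polynomial, so $[f]$ is a finite $K$-linear combination of elements of $\mathcal{F}$, and $\dim_K(J/I) = \#\mathcal{F} = \sum_{i=s+1}^t \#\Delta_i$. If instead some $\Delta_{i_0}$ with $i_0 > s$ is infinite, the independence argument above already exhibits $\{[g^{i_0} x^{\beta - \nu(g^{i_0})}]\}_{\beta \in \Delta_{i_0}}$ as an infinite linearly independent subset of $J/I$, so both sides are infinite and equality still holds.

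The main subtlety is precisely this infinite-dimensional case: Proposition~\ref{nr6} in general yields representatives $\sum q_i g^i$ whose $q_i$'s are bona fide power series rather than polynomials, and such a representative is not literally a $K$-linear combination of elements of $\mathcal{F}$. The case split above, using finiteness of each $\Delta_i$ to promote the $q_i$'s to polynomials in exactly the situation where a $K$-basis is required, bypasses this issue cleanly.
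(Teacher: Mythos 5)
Your proof is correct and supplies the argument that the paper leaves implicit: Corollary~\ref{nr7} is intended to follow directly from the unique representation in Proposition~\ref{nr6}, with the monomial family $\{[g^i x^{\beta-\nu(g^i)}]\}$ as basis, linear independence via the uniqueness in Proposition~\ref{nr4}, and spanning via Proposition~\ref{nr6} --- exactly as you wrote. Your explicit treatment of the case where some $\Delta_i$ is infinite (noting that the $q_i$ produced by Proposition~\ref{nr6} are a priori power series, that finiteness of $\Delta_i$ is what makes them polynomials, and exhibiting an infinite independent family otherwise) is a careful reading of the equality as ``finite iff finite, and equal in that case,'' a subtlety the paper's terse statement glosses over.
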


Applying the Nakayama Lemma one may prove

\begin{prop}\label{nr8}
Assume that $I\subset J$ are ideals in $\Opis_{K,0}$.

Then $\dim_K (J/I)<\infty$ if and only if there exists $k$ such that $\bm_0^k J\subset I$. 
\end{prop}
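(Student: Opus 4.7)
The plan is to reformulate the statement in module-theoretic terms and apply Nakayama's lemma as the hint suggests. Set $R=\Opis_{K,0}$ and $M=J/I$. Since $R$ is noetherian and $J\subset R$ is an ideal, $J$ is finitely generated as an $R$-module, hence so is the quotient $M$. The proposition then reduces to the statement that, for a finitely generated module $M$ over the local ring $(R,\bm_0)$, one has $\dim_K M<\infty$ iff $\bm_0^k M=0$ for some $k$.

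For the easy direction ($\Leftarrow$), if $\bm_0^k J\subset I$ then $\bm_0^k M=0$, so $M$ is naturally a module over $R/\bm_0^k$. This quotient ring is finite-dimensional over $K$ (spanned by the monomials $x^\alpha$ with $|\alpha|<k$, since $\bm_0^k$ contains every monomial of degree $\geq k$). A finitely generated module over a finite-dimensional $K$-algebra is itself finite-dimensional over $K$, giving $\dim_K(J/I)<\infty$.

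For the nontrivial direction ($\Rightarrow$), assume $\dim_K M<\infty$ and consider the descending chain of $K$-subspaces
\[
M\supset \bm_0 M\supset \bm_0^2 M\supset\cdots
\]
By finite-dimensionality this chain stabilizes, so there exists $k$ with $\bm_0^{k+1}M=\bm_0^k M$. Now $\bm_0^k M$ is a finitely generated $R$-module over the local ring $R$ with maximal ideal $\bm_0$, and it satisfies $\bm_0\cdot(\bm_0^k M)=\bm_0^k M$. Nakayama's lemma then forces $\bm_0^k M=0$, which translates back to $\bm_0^k J\subset I$.

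The only genuine subtlety is verifying that $M=J/I$ is finitely generated as an $R$-module (so that Nakayama applies); this is immediate from noetherianness of $\Opis_{K,0}$. The rest is routine once one has the right formulation, and indeed the argument is exactly the standard proof that an Artinian finitely generated module over a noetherian local ring has a power of the maximal ideal annihilating it.
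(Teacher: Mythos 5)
Your proof is correct and follows exactly the Nakayama-based route the paper indicates (the paper simply asserts the proposition ``Applying the Nakayama Lemma one may prove'' without writing out the details). Both directions are handled cleanly, and the stabilization-of-the-chain argument combined with Nakayama for the forward direction is the standard argument the authors have in mind.
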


From Corollaries \ref{nr2}, \ref{nr7} and Proposition \ref{nr8} we get 

\begin{cor}\label{nr9}
Assume that $I\subset J$ are ideals in $\Opis_{K,0}$. The following conditions are equivalent:
\begin{itemize}
\item[(i)] $\dim_K (J/I)<\infty$,
\item[(ii)] $\# (\Npis(J)\setminus\Npis(I))<\infty$,
\item[(iii)] there exists $k$ such that  $\bm_0^k J\subset I$,
\item[(iv)] there is an open neighbourhood $W$ of the origin in $\C^n$
such that $I_p=J_p$ for $p\in W\setminus\{\bO\}$.
\end{itemize}

If that is the case and both ideals are proper then $V_C(I)=V_C(J)$. 
\end{cor}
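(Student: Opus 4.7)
The statement is presented as an immediate consequence of Corollaries \ref{nr2}, \ref{nr7} and Proposition \ref{nr8}, so the plan is to assemble those three inputs into a cycle of equivalences. The equivalence (i) $\Leftrightarrow$ (ii) is a direct consequence of the numerical identity $\dim_K(J/I)=\#(\Npis(J)\setminus\Npis(I))$ furnished by Corollary \ref{nr7}; finiteness of one side is tautologically finiteness of the other. The equivalence (i) $\Leftrightarrow$ (iii) is Proposition \ref{nr8} applied verbatim to the inclusion $I\subset J$. For (iii) $\Leftrightarrow$ (iv) I would invoke Corollary \ref{nr2}; that Corollary is stated with a symmetric hypothesis $\bm_0^k I\subset J$ and $\bm_0^k J\subset I$, but the first inclusion is automatic from $I\subset J$ for every $k\geq 0$, so the Corollary collapses to precisely the equivalence required.

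When $K=\R$, condition (iv) is intrinsically complex (it refers to an open set in $\C^n$), so I would apply Corollary \ref{nr2} to the complexified ideals generated in $\Opis_{C,0}$ by holomorphic representatives of $f_1,\ldots,f_s$ and $g_1,\ldots,g_t$. The diagram of initial exponents is insensitive to this base change, and $\dim_\R(J/I)$ coincides with the dimension of the corresponding complex quotient, so (i), (ii), (iii) transfer unchanged and can be freely coupled with the complex geometric statement (iv).

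For the final assertion, properness of $I$ and $J$ places $\bO$ in $V_C(I)\cap V_C(J)$, and the last sentence of Corollary \ref{nr2} then gives $V_C(I)=V_C(J)$.

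The main (mild) obstacle is simply verifying that the two-sided hypothesis of Corollary \ref{nr2} collapses correctly under $I\subset J$, and — when $K=\R$ — tracking that conditions (i)--(iii) are preserved upon complexification so that (iv) can be meaningfully coupled to them. No substantively new ingredient is required.
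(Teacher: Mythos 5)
Your proposal is correct and follows essentially the same route as the paper, which presents the corollary as an immediate consequence of Corollaries \ref{nr2}, \ref{nr7} and Proposition \ref{nr8} without spelling out the assembly; your write-up simply carries out that assembly, including the observation that $\bm_0^k I\subset J$ is automatic from $I\subset J$ and that properness places $\bO$ in both zero sets. The remark about complexification when $K=\R$ is a reasonable precaution (it is implicit in the paper's setup via Proposition \ref{nr101}(i) and the field-independence of the division theorem), and introduces nothing beyond the paper's intent.
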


Applying the local Nullstellensatz we get

\begin{cor}\label{nr10}
Assume that $I\subset\Opis_{K,0}$ is an ideal. The following conditions are equivalent
\begin{itemize}
\item[(i)] $\dim_K (\Opis_{K,0}/I)<\infty$,
\item[(ii)] $\#(\N^n\setminus\Npis(I))<\infty$,
\item[(iii)] $V_C(I)\subset\{\bO\}$ in some neighbourhood of the origin. 
\end{itemize}
\end{cor}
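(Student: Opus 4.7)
The plan is to reduce Corollary \ref{nr10} to Corollary \ref{nr9} by specializing the ambient ideal $J$ to the unit ideal $\mathcal{O}_{K,0}$. Once this substitution is made, each of the three conditions in the corollary corresponds directly to one of the four equivalent conditions in Corollary \ref{nr9}, so there is essentially no new work — only a bookkeeping verification that the translations come out correctly.

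First I would observe that $\mathcal{N}(\mathcal{O}_{K,0}) = \mathbb{N}^n$: indeed every monomial $x^\alpha$ lies in the unit ideal and has $\nu(x^\alpha)=\alpha$. With this, condition (ii) of the present statement, $\#(\mathbb{N}^n \setminus \mathcal{N}(I)) < \infty$, becomes exactly condition (ii) of Corollary \ref{nr9} applied to $J=\mathcal{O}_{K,0}$. Next I would translate the stalk-wise equality. For every $p$ in a neighbourhood of the origin, $J_p = \mathcal{O}_{C,p}$ is the unit ideal, so $I_p = J_p$ holds if and only if some generator of $I$ is a unit at $p$, i.e.\ if and only if $p \notin V_C(I)$. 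Therefore condition (iv) of Corollary \ref{nr9} — that $I_p = J_p$ for $p \in W \setminus \{\mathbf{0}\}$ — is equivalent to $V_C(I) \cap W \subset \{\mathbf{0}\}$, which is exactly condition (iii) here.

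With these two identifications in place, the equivalence (i)$\Leftrightarrow$(ii) is immediate from Corollary \ref{nr7} (or equivalently from (i)$\Leftrightarrow$(ii) of Corollary \ref{nr9}), and (i)$\Leftrightarrow$(iii) is immediate from (i)$\Leftrightarrow$(iv) of Corollary \ref{nr9}. I do not expect any genuine obstacle: the substantive content — including the use of R\"uckert's local Nullstellensatz to pass from a geometric condition on $V_C(I)$ to the algebraic condition $\mathbf{m}_0^k \subset I$ — has already been absorbed into Proposition \ref{nr1} and Corollary \ref{nr9}. The only point requiring a moment's thought is the first observation, that the diagram of the whole ring is all of $\mathbb{N}^n$, together with the standard fact that invertibility of a generator is equivalent to non-vanishing of that generator.
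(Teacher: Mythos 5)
Your proposal is correct and is essentially the argument the paper intends: the paper's one‑line justification ``Applying the local Nullstellensatz we get'' ultimately rests on the same Nullstellensatz mechanism already packaged into Proposition \ref{nr1} and Corollary \ref{nr9}, and your specialization $J=\Opis_{K,0}$ cleanly extracts it. Your two translation observations — that $\Npis(\Opis_{K,0})=\N^n$ because $\nu(x^\alpha)=\alpha$ for every monomial, and that $I_p=\Opis_{C,p}$ iff some generator is a unit at $p$ iff $p\notin V_C(I)$ — are exactly the bookkeeping that makes the reduction go through, and the equivalence (i)$\Leftrightarrow$(iv) of Corollary \ref{nr9} (or equivalently (i)$\Leftrightarrow$(iii) followed by Corollary \ref{nr2} with one ideal the unit ideal) then gives (i)$\Leftrightarrow$(iii) here.
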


Let $f_1,\ldots,f_m\in\Opis_{R,0}$, and let
\[h_i^a=\sum_{j=1}^m A_{ij}(a)f_j,\ 1\leq i\leq \ell,\]
where $A_{ij}\in\R[a_1,\ldots,a_d]$ and $a=(a_1,\ldots,a_d)$.

For $a\in\K^d$, let $I_K^a$ denote the ideal in $\Opis_{K,0}$
generated by $h_1^a,\ldots,h_\ell^a$.
Applying arguments presented in (\cite{bierstonemilman}, Chapter I) one may prove
\begin{prop}\label{nr101}

\begin{itemize}
\item[(i)] if $a\in\R^d$ then $\Npis(I_R^a)=\Npis(I_C^a)$,
\item[(ii)] there exists a proper algebraic set $\Sigma_K\subset\K^d$, defined by
polynomials with real coefficients, such that
$\Npis(I_K^a)$ is constant for $a\in\K^d\setminus\Sigma_K$. 
In particular, $\Sigma_R=\Sigma_C\cap \R^d$ is a proper
real algebraic set and $\Npis(I_R^a)$ is constant for $a\in\R^d\setminus\Sigma_R$. 
\end{itemize}
\end{prop}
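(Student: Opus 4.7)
The plan is to establish both parts by running the Hironaka-Grauert standard basis algorithm underlying Bierstone-Milman's Chapter I, for which Theorem \ref{nr3} is the basic division result. For part (i), fix $a\in\R^d$ and apply the algorithm to the real generators $h_1^a,\ldots,h_\ell^a\in\Opis_{R,0}$: one iteratively builds $g^1,g^2,\ldots$ by reducing each $h_i^a$ against the current set and adjoining any nonzero remainders of S-polynomial combinations. The process terminates after finitely many steps because every new generator contributes a new element of $\Bpis(I_R^a)$, and $\Bpis(I_R^a)$ is finite by Dickson's lemma. Every operation in the algorithm consists of additions, multiplications by series, and divisions by a nonzero leading scalar, so the basis stays inside $\Opis_{R,0}$. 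Viewed inside $\Opis_{C,0}$, this same finite set of elements generates the complex ideal $I_C^a$ and remains a standard basis there, by the uniqueness clause in Theorem \ref{nr3} and the characterization of the diagram via Corollary \ref{nr7}. Hence $\Npis(I_R^a)=\bigcup_i(\nu(g^i)+\N^n)=\Npis(I_C^a)$.

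For part (ii) with $\K=\C$, I parameterize the algorithm in $a$. Since $A_{ij}\in\R[a]$ and $f_j\in\Opis_{R,0}$, the $x^\alpha$-coefficient of $h_i^a$ is a polynomial $p_{i,\alpha}(a)$ with real coefficients. Call $\nu_i^{\mathrm{gen}}:=\min\{\alpha:p_{i,\alpha}\not\equiv 0\}$ the generic initial exponent of $h_i^a$; it is attained whenever $p_{i,\nu_i^{\mathrm{gen}}}(a)\neq 0$. On the Zariski-open complement of the zero set of these leading polynomials, the first step of the algorithm has a fixed leading structure; the subsequent reduction and S-polynomial steps involve only rational operations in $a$, so each new basis element has coefficients that are rational functions in $a$ whose denominators are products of leading polynomials. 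By the termination observation above only finitely many such denominators arise; let $\Sigma_C\subset\C^d$ be the zero set of their product. Then $\Sigma_C$ is a proper algebraic subset of $\C^d$ cut out by a polynomial with real coefficients, and for every $a\in\C^d\setminus\Sigma_C$ the algorithm executes the same combinatorial branching and produces a standard basis with a fixed family of leading exponents; hence $\Npis(I_C^a)$ is constant on $\C^d\setminus\Sigma_C$.

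Setting $\Sigma_R=\Sigma_C\cap\R^d$, the defining polynomial is a nonzero element of $\R[a]$, so it cannot vanish on all of $\R^d$ and $\Sigma_R$ is a proper real algebraic subset. For $a\in\R^d\setminus\Sigma_R$ part (i) gives $\Npis(I_R^a)=\Npis(I_C^a)$, and the right-hand side equals the generic complex diagram; hence $\Npis(I_R^a)$ is also constant there. The main obstacle is the bookkeeping in the parameterized algorithm: one must check that, on a nonempty Zariski-open locus, the process follows a uniform combinatorial path, so that only finitely many leading polynomials arise and the S-polynomial termination test succeeds uniformly in $a$. Both points follow from the finiteness of $\Bpis$ together with the standard basis termination argument, carried out with the proviso that every operation is rational in $a$ and becomes valid over the intersection of the finitely many Zariski-open sets where the relevant leading coefficients are nonzero.
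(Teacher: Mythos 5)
The paper gives no proof of Proposition \ref{nr101} (it defers to \cite{bierstonemilman}, Chapter I), and your plan --- a Hironaka division/standard-basis process carried out with coefficients rational in the parameter $a$, with the bad set cut out by the finitely many leading coefficients --- is the natural implementation of that machinery, so the route itself is reasonable. But as written, the two steps that carry the actual content are asserted rather than proved. In (i) the whole point is the inclusion $\Npis(I_C^a)\subset\Npis(I_R^a)$, i.e.\ that a standard basis produced over $\R$ remains a standard basis of the ideal it generates in $\Opis_{C,0}$, and the reasons you cite cannot deliver it: the uniqueness clause of Theorem \ref{nr3} concerns the division of a fixed $f$ by a fixed family and says nothing about why an arbitrary $f\in I_C^a$ has zero remainder with respect to the real basis, while Corollary \ref{nr7} only counts the diagram difference once the diagrams are already known. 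The missing idea is, for instance, that any $f\in I_C^a$ can be written $f=F_1+iF_2$ with $F_1,F_2\in I_R^a$ (split the complex coefficients of a representation $f=\sum_j q_jh_j^a$ into real and imaginary parts), whence $\nu(f)=\min(\nu(F_1),\nu(F_2))\in\Npis(I_R^a)$; this short argument proves (i) outright, with no standard bases at all. Otherwise you would need a Buchberger/S-series criterion valid over both fields, which appears neither in the paper nor in your text.

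In (ii) there is a parallel gap at the specialization step. For $a\notin\Sigma_C$ the specialized elements do lie in $I_C^a$ and keep their generic initial exponents, which gives one inclusion (generic diagram $\subset\Npis(I_C^a)$); constancy also requires the reverse inclusion, i.e.\ that the specialized family is still a standard basis of $I_C^a$, so that the diagram does not jump upward at special parameters. ``The algorithm executes the same combinatorial branching'' does not establish this: you need (a) a criterion stating that zero reduction of the finitely many S-series certifies the standard-basis property in $\Opis_{C,0}$, and (b) the fact that the (infinite, coefficient-by-coefficient) division process commutes with specialization wherever the leading coefficients do not vanish, including the comparison between the formal output with coefficients in $\R(a)$ and the convergent division of Theorem \ref{nr3}. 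These are exactly the points where the semicontinuity of $a\mapsto\Npis(I_K^a)$ quoted from \cite{bierstonemilman} does its work, and they do not follow from ``finiteness of $\Bpis$ plus termination''; note also that no single reduction in $\Opis_{K,0}$ is a finite sequence of arithmetic operations, so the control of denominators must come from the remark that every coefficient lies in the localization of $\R[a]$ at the finitely many leading coefficients, not from termination of an algorithm. With the real/imaginary-part argument for (i) and a properly stated specialization lemma (or the semicontinuity result of \cite{bierstonemilman}) for (ii), your outline becomes a complete proof; without them it has genuine gaps precisely at the assertions the proposition is about.
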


Let $J_K$ denote the ideal in $\Opis_{K,0}$  generated 
by $f_1,\ldots, f_m$. Each $I_K^a\subset J_K$, and then $\Npis(I_K^a)\subset\Npis(J_K)$.

The function $\K^d\ni a\mapsto\Npis(I_K^a)$ is upper-semicontinuous
(see \cite{bierstonemilman} for details), so we have

\begin{prop}\label{nr11}
If there exists $a\in\K^d$ such that
$\#(\Npis(J_K)\setminus\Npis(I_K^a))<\infty$,
then 
$\#(\Npis(J_K)\setminus\Npis(I_K^a))<\infty$ for all $a\in\K^d\setminus\Sigma_K$. 
\end{prop}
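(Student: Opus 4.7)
The approach rests on the two facts mentioned just before the statement: the generic constancy given by Proposition \ref{nr101}(ii), and the upper--semicontinuity of the set--valued map $a\mapsto\Npis(I_K^a)$. Let $\Npis^{*}$ denote the constant value taken by $\Npis(I_K^a)$ on the dense set $\K^d\setminus\Sigma_K$. The first thing I would pin down --- and this is really the crux --- is that $\Npis^{*}$ is the maximal value attained, i.e.\ $\Npis(I_K^a)\subseteq\Npis^{*}$ for every $a\in\K^d$. Intuitively, at a special parameter the ideal degenerates and may lose some leading terms, so its diagram only shrinks, and the generic diagram dominates every other.

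Once this is in place the argument is short. I would fix some $a_0$ with $\#(\Npis(J_K)\setminus\Npis(I_K^{a_0}))<\infty$ and apply upper--semicontinuity at $a_0$ to produce a neighborhood $U$ of $a_0$ on which $\Npis(I_K^a)\supseteq\Npis(I_K^{a_0})$. Since $\Sigma_K\subset\K^d$ is a proper algebraic subset, the open set $U\setminus\Sigma_K$ is nonempty, and for any $a_1$ in it
\[\Npis^{*}=\Npis(I_K^{a_1})\supseteq\Npis(I_K^{a_0}),\]
whence
\[\Npis(J_K)\setminus\Npis^{*}\subseteq\Npis(J_K)\setminus\Npis(I_K^{a_0}),\]
and the left--hand side is therefore finite. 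Since $\Npis(I_K^a)=\Npis^{*}$ for every $a\in\K^d\setminus\Sigma_K$, this gives exactly the desired conclusion.

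The main (and really the only nontrivial) obstacle is checking the direction of the upper--semicontinuity, i.e.\ confirming that the diagram can only grow as one passes from a special parameter to a generic one. This is a convention check against Chapter~I of \cite{bierstonemilman}; it matches the standard semicontinuity statement that $\#(\N^n\setminus\Npis(I_K^a))$ (equivalently the Hilbert--Samuel function of $\Opis_{K,0}/I_K^a$) jumps \emph{up} on closed subsets of the parameter space, so that the diagram itself can only shrink on those subsets and is maximal on the generic locus.
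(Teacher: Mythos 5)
Your overall strategy (combine the generic constancy of $\Npis(I_K^a)$ from Proposition~\ref{nr101}(ii) with a semicontinuity statement) is the right one, and the paper itself gives no more than a one--line pointer to \cite{bierstonemilman} here. But the concrete claim on which you hang everything --- that $\Npis(I_K^a)\subseteq\Npis^{*}$ for every $a$, so that the diagram of initial exponents is \emph{maximal} (for set inclusion) on the generic locus --- is false. The upper--semicontinuity in Bierstone--Milman refers to a total order on diagrams that is \emph{not} reverse inclusion, and your ``convention check'' via the Hilbert--Samuel function does not rescue it: the Hilbert--Samuel function records only the \emph{cardinalities} $\#\{\alpha\notin\Npis:|\alpha|\le k\}$, which can stay comparable while the diagrams themselves become incomparable.

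A concrete counterexample in $\Opis_{K,0}$ with $n=2$: take $f_1=x^2$, $f_2=y$, $f_3=xy$, and $h_1^a=f_1+af_2=x^2+ay$, $h_2^a=f_3=xy$, so $I_K^a=\langle x^2+ay,\,xy\rangle$. With the paper's ordering one has $\nu(x^2+ay)=(0,1)$ for $a\ne0$, and one checks $\Bpis(I_K^0)=\{(2,0),(1,1)\}$ while $\Bpis(I_K^a)=\{(0,1),(3,0)\}$ for $a\ne0$. Thus $(2,0)\in\Npis(I_K^0)\setminus\Npis^{*}$ and $(0,1)\in\Npis^{*}\setminus\Npis(I_K^0)$: neither diagram contains the other, so your step~1 fails, and with it the neighborhood $U$ in step~3.

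The gap can be closed without diagram inclusion by working with the Hilbert--Samuel function directly. For each fixed $k$, $H_a(k)=\dim_K\Opis_{K,0}/(I_K^a+\bm_0^{k+1})$ is upper--semicontinuous in $a$ (a corank of a matrix depending polynomially on $a$), so the generic value $H^{*}(k)$ satisfies $H^{*}(k)\le H_{a_0}(k)$ for \emph{every} $a_0$ and every $k$. Since $\Npis(I_K^a)\subseteq\Npis(J_K)$, one has
\[
\#\{\alpha\in\Npis(J_K)\setminus\Npis(I_K^a)\ :\ |\alpha|\le k\}\;=\;H_a(k)-H_{J_K}(k),
\]
so taking the supremum over $k$,
\[
\#\bigl(\Npis(J_K)\setminus\Npis^{*}\bigr)\;=\;\sup_k\bigl(H^{*}(k)-H_{J_K}(k)\bigr)\;\le\;\sup_k\bigl(H_{a_0}(k)-H_{J_K}(k)\bigr)\;=\;\#\bigl(\Npis(J_K)\setminus\Npis(I_K^{a_0})\bigr)<\infty.
\]
Combined with Proposition~\ref{nr101}(ii), which identifies $\Npis(I_K^a)=\Npis^{*}$ for $a\notin\Sigma_K$, this gives the statement. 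So the repair is not to claim the diagrams nest, but to pass through the Hilbert--Samuel numbers where the semicontinuity is unambiguous.
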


\section{Curves having isolated singularity}
Let $f_1,\ldots, f_m\in\bm_0\cap\Opis_{K,0}$. The next proposition is a consequence
of well-known properties of complex analytic germs
(see \cite{narasimhan}, Proposition 4, p.46, and Proposition 13, p.60).

\begin{prop}\label{nr12}
$\dim V_C(f_1,\ldots,f_m)\leq 1$ if and only if there exists
a germ $a\in \bm_0$ such that $V_C(f_1,\ldots,f_m,a)=\{\bO\}$.

If that is the case and $\bO$ is not isolated in $V_C(f_1,\ldots,f_m)$,
then $V_C(f_1,\ldots,f_m)$ is a 1-dimensional germ, i.e. it is a complex
curve in some neighbourhood of the origin. 
\end{prop}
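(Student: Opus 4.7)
The plan is to reduce the proposition to two standard facts from complex analytic geometry which are exactly the cited results in Narasimhan: the analytic Krull Hauptidealsatz (cutting by a single equation lowers the dimension by at most one), and the generic projection theorem (any analytic germ of dimension $\leq d$ admits $d$ analytic functions vanishing at $\bO$ whose common zero set with the germ is $\{\bO\}$).

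For the direction ($\Rightarrow$), I would decompose $V_C(f_1,\ldots,f_m)$ into its finitely many irreducible analytic components $W_1,\ldots,W_r$ at $\bO$, each of dimension $0$ or $1$. For every $1$-dimensional branch $W_j$ the set of linear forms $a=\sum c_i z_i$ vanishing identically on $W_j$ is a proper linear subspace of $\C^n$, since $W_j$ is not contained in every hyperplane through the origin. Consequently, a generic linear form $a\in\bm_0$ does not vanish identically on any $W_j$, and hence meets each $W_j$ in a proper analytic subset of a $1$-dimensional germ, which is forced to be just $\{\bO\}$. Therefore $V_C(f_1,\ldots,f_m,a)=\{\bO\}$.

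For the converse ($\Leftarrow$), suppose $a\in\bm_0$ satisfies $V_C(f_1,\ldots,f_m,a)=\{\bO\}$. By the analytic principal ideal theorem (Narasimhan, Prop.~13, p.~60), adjoining the single equation $a=0$ drops dimension by at most one, so
\[\dim V_C(f_1,\ldots,f_m)\ \leq\ 1+\dim\{\bO\}\ =\ 1.\]
For the last assertion, if $\dim V_C(f_1,\ldots,f_m)\leq 1$ and $\bO$ is not isolated, then there exist points of $V_C(f_1,\ldots,f_m)\setminus\{\bO\}$ arbitrarily close to $\bO$, so the local dimension at $\bO$ is at least $1$. Combined with the upper bound, this forces $\dim V_C(f_1,\ldots,f_m)=1$, i.e.\ a complex analytic curve in some neighbourhood of $\bO$.

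The main technical point will be justifying cleanly that a generic linear form $a$ simultaneously avoids all the finitely many $1$-dimensional branches; this is the nontrivial ingredient borrowed from Narasimhan, Prop.~4, p.~46. Once that is quoted, both directions and the final sharpening of the dimension statement are short dimension-counting steps.
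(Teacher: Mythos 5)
Your proof is correct and essentially matches what the paper invokes: the paper gives no argument of its own, merely citing Narasimhan (Prop.~4, p.~46 and Prop.~13, p.~60), and your two ingredients --- the analytic Krull principal ideal theorem for ($\Leftarrow$) and a generic-linear-form/irreducible-decomposition argument for ($\Rightarrow$), plus the trivial dimension count for the final sharpening --- are exactly what those citations supply. No gap.
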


Suppose that $\dim V_C(f_1,\ldots,f_m)\leq 1$. By
\[D(f_1,\ldots,f_m)(p)=\left[ \frac{\partial f_j}{\partial z_i}(p)\right]\]
we shall denote the Jacobian matrix at $p$.
We shall say that $V_C(f_1,\ldots, f_m)$ is a curve having an isolated singularity
at the origin if 
\[\{ p\in V_C(f_1,\ldots,f_m)\ |\ \operatorname{rank}\, D(f_1,\ldots,f_m)(p)<n-1\}\subset\{\bO\}\]
in some neighbourhood of the origin. 

If that is the case and $\dim V_C(f_1,\ldots, f_m)=1$, then
\[\operatorname{rank}\, D(f_1,\ldots,f_m)\equiv n-1\mbox{ on }V_C(f_1,\ldots,f_m)\setminus\{\bO\}\ .\]
We want to point out that if $V_C(f_1,\ldots,f_m)=\{\bO\}$ then we also call it a curve having
an isolated singularity, despite that $\dim V_C(f_1,\ldots,f_m)= 0$.

Let $I\subset \Opis_{K,0}$ denote the ideal generated by $f_1,\ldots, f_m$
and all $(n-1)\times(n-1)$--minors of $D(f_1,\ldots,f_m)$. By  Corollary \ref{nr10} we get

\begin{prop}\label{nr13}
$V_C(f_1,\ldots, f_m)$ is a curve having an isolated singularity at the origin
if and only if $\dim_K \Opis_{K,0}/I<\infty$,
i.e. if $\#(\N^n\setminus\Npis(I))<\infty$. 
\end{prop}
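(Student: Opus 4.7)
The plan is to apply Corollary \ref{nr10} to the ideal $I$, once I have identified $V_C(I)$ geometrically with the ``singular locus'' appearing in the definition. The proposition is essentially a one-line invocation of that corollary, so there is little to prove beyond making this identification precise.

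First I would record the standard fact from linear algebra that an $m\times n$ matrix has rank strictly less than $n-1$ at a point precisely when all of its $(n-1)\times(n-1)$ minors vanish there. Since $I$ is by definition generated by $f_1,\ldots,f_m$ together with all $(n-1)\times(n-1)$ minors of $D(f_1,\ldots,f_m)$, this gives, in some neighbourhood of $\bO$,
\[
V_C(I)=\{p\in V_C(f_1,\ldots,f_m)\ |\ \operatorname{rank}\,D(f_1,\ldots,f_m)(p)<n-1\}.
\]
By the definition stated just before the proposition, $V_C(f_1,\ldots,f_m)$ is a curve with isolated singularity at the origin if and only if this set is contained in $\{\bO\}$ near $\bO$, i.e.\ if and only if $V_C(I)\subset\{\bO\}$ in some neighbourhood of the origin.

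Now I would invoke Corollary \ref{nr10} applied to the ideal $I\subset\Opis_{K,0}$: its three equivalent conditions say that $V_C(I)\subset\{\bO\}$ near $\bO$ holds if and only if $\dim_K\Opis_{K,0}/I<\infty$, if and only if $\#(\N^n\setminus\Npis(I))<\infty$. Chaining these equivalences with the identification of the previous paragraph yields the proposition.

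The only subtlety — which I would flag explicitly but not belabour — is the real/complex bookkeeping: when $\K=\R$, the generators of $I$ are real analytic germs, and $V_C(I)$ is interpreted as the zero set (in a complex neighbourhood of $\bO$) of the ideal they generate in $\Opis_{C,0}$. Since Corollary \ref{nr10} is stated uniformly for $\K\in\{\R,\C\}$, this causes no trouble. There is no real obstacle here; the work of the proposition has already been done by Corollary \ref{nr10} and by R\"uckert's Nullstellensatz used in its proof.
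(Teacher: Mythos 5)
Your proof is correct and matches the paper's own (implicit) argument, which is simply the phrase ``By Corollary \ref{nr10} we get'' preceding the proposition: one identifies $V_C(I)$ with the locus where the rank of $D(f_1,\ldots,f_m)$ drops below $n-1$ along $V_C(f_1,\ldots,f_m)$ and then invokes the three equivalent conditions of Corollary \ref{nr10}. The only point you glide over is that the definition of ``curve having an isolated singularity'' also presupposes $\dim V_C(f_1,\ldots,f_m)\leq 1$; but this is automatic, since if the rank of $D(f_1,\ldots,f_m)$ equals $n-1$ at a point $p\neq\bO$ of the variety then $V_C(f_1,\ldots,f_m)$ is one-dimensional near $p$, so the rank condition holding off $\{\bO\}$ already forces $\dim\leq 1$.
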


\begin{lemma}\label{nr16}
Let $g_1,\ldots,g_{n-1},h\in\left<f_1,\ldots,f_m\right>\subset\Opis_{K,0}$.
Assume that curves $V_C(f_1,\ldots,f_m)$ and $V_C(g_1,\ldots,g_{n-1})$ have an isolated singularity at the origin.

Then $V_C(f_1,\ldots,f_m)=V_C(g_1,\ldots,g_{n-1},h)$ if and only if
\[\dim_K \left<f_1,\ldots,f_m\right>/\left<g_1,\ldots,g_{n-1},h\right>\ <\infty.\]
\end{lemma}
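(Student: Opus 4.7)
My plan is to set $J=\langle f_1,\ldots,f_m\rangle$ and $I=\langle g_1,\ldots,g_{n-1},h\rangle$ and observe that $I\subset J$ with both ideals proper (since all generators vanish at $\bO$). The backward implication will then follow immediately from Corollary \ref{nr9}: the finite-codimension hypothesis (i) together with $\bO\in V_C(I)\cap V_C(J)$ yields $V_C(J)=V_C(I)$.

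For the forward implication I intend to use the equivalence (i)$\Leftrightarrow$(iv) of Corollary \ref{nr9} the other way: assuming $V_C(J)=V_C(I)$, it suffices to prove $I_p=J_p$ at every $p\neq\bO$ in some neighbourhood of the origin. I split into two cases according to whether $p$ lies on the common zero set. If $p\notin V_C(J)=V_C(I)$, then some generator of $J$ (respectively of $I$) is a unit in $\Opis_{C,p}$, so both localized ideals coincide with the whole ring $\Opis_{C,p}$.

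The substantial case is $p\in V_C(J)\setminus\{\bO\}=V_C(I)\setminus\{\bO\}$. Here I will use the isolated-singularity hypothesis on $V_C(f_1,\ldots,f_m)$ to get $\operatorname{rank}\, D(f_1,\ldots,f_m)(p)=n-1$; the implicit function theorem then presents $V_C(J)$ as a smooth curve germ at $p$ whose reduced ideal is precisely $J_p$. Applying the same reasoning to $g_1,\ldots,g_{n-1}$, the germ $V_C(\langle g_1,\ldots,g_{n-1}\rangle)$ is also smooth at $p$, with $\langle g_1,\ldots,g_{n-1}\rangle_p$ as its reduced ideal. Since $V_C(J)_p\subset V_C(\langle g_1,\ldots,g_{n-1}\rangle)_p$ is an inclusion of smooth, hence irreducible, curve germs at $p$, they must coincide, whereupon the two radical ideals coincide as well. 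Sandwiching
\[\langle g_1,\ldots,g_{n-1}\rangle_p\ \subset\ I_p\ \subset\ J_p\ =\ \langle g_1,\ldots,g_{n-1}\rangle_p\]
then forces $I_p=J_p$, completing the local verification.

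The main obstacle I expect is the careful identification of $J_p$ and $\langle g_1,\ldots,g_{n-1}\rangle_p$ as the reduced ideal of the same smooth curve germ at each $p\neq\bO$; this is where the two isolated-singularity hypotheses enter essentially, through the Jacobian criterion and the local Nullstellensatz applied at off-origin points. Once this identification is in place, the finite-codimension conclusion is a direct application of Corollary \ref{nr9}.
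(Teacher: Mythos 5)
Your proposal is correct and follows essentially the same route as the paper: both directions go through Corollary~\ref{nr9}, and the forward implication is reduced, via (i)$\Leftrightarrow$(iv), to showing $I_p=J_p$ at points $p\neq\bO$ near the origin by exploiting that the two isolated-singularity hypotheses force $\operatorname{rank}\,D(f_1,\ldots,f_m)(p)=\operatorname{rank}\,D(g_1,\ldots,g_{n-1})(p)=n-1$ on the common curve. Your sandwich $\langle g_1,\ldots,g_{n-1}\rangle_p\subset I_p\subset J_p$, with both ends identified as the reduced ideal of the same smooth curve germ at $p$, is a minor cosmetic variant of the paper's direct computation of $\operatorname{rank}\,D(g_1,\ldots,g_{n-1},h)(p)=n-1$; your explicit treatment of the trivial case $p\notin V_C(J)$ is a detail the paper leaves implicit.
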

\begin{proof} ($\Leftarrow$) is a consequence of Corollary \ref{nr9}.\\
($\Rightarrow$) Take $p\in V_C(f_1,\ldots,f_m)\setminus\{\bO\}=V_C(g_1,\ldots,g_{n-1},h)\setminus\{\bO\}$
near the origin. The germ of $V_C(g_1,\ldots,g_{n-1},h)$ at $p$ is one--dimensional, so
\[\operatorname{rank}\, D(g_1,\ldots,g_{n-1})(p)\leq\operatorname{rank}\, D(g_1,\ldots,g_{n-1},h)(p)\leq n-1.\]
As $V_C(f_1,\ldots,f_m)$ and $V_C(g_1,\ldots,g_{n-1})$ have an isolated singularity,
\[n-1=\operatorname{rank}\, D(f_1,\ldots,f_m)(p)=\operatorname{rank}\, D(g_1,\ldots,g_{n-1})(p),\]
and then $\operatorname{rank}\, D(f_1,\ldots,f_m)(p)=\operatorname{rank}\, D(g_1,\ldots,g_{n-1},h)(p)=n-1$.

Then ideals generated by representatives of $f_1,\ldots,f_m$ and
$g_1,\ldots,g_{n-1},h$ in $\Opis_{C,p}$ are equal. By Corollary \ref{nr9} (i)(iv),
\[\dim_K\left<f_1,\ldots,f_m\right>/\left<g_1,\ldots,g_{n-1},h\right>\ <\infty .\] \end{proof}

\begin{lemma}
If $g_1,\ldots,g_\ell \in\left< f_1,\ldots,f_m \right>$ and $V_C(g_1,\dots,g_\ell)$
has an isolated singularity at the origin, then $V_C(f_1,\ldots,f_m)$ has an isolated singularity too.
\end{lemma}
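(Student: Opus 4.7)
The plan is to exploit the fact that since each $g_i$ lies in the ideal $\langle f_1,\ldots,f_m\rangle$, there exist holomorphic functions $c_{ij}$ defined in some neighbourhood of the origin with
\[ g_i = \sum_{j=1}^{m} c_{ij}\,f_j, \qquad i=1,\ldots,\ell. \]
In particular $V_C(f_1,\ldots,f_m)\subset V_C(g_1,\ldots,g_\ell)$. By hypothesis the latter has an isolated singularity, and the definition given earlier in the paper presupposes that $\dim V_C(g_1,\ldots,g_\ell)\leq 1$; hence also $\dim V_C(f_1,\ldots,f_m)\leq 1$, so it makes sense to talk about isolated singularity of $V_C(f_1,\ldots,f_m)$.

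Next I would differentiate the relation above. For each $k$,
\[ \frac{\partial g_i}{\partial z_k} = \sum_{j=1}^{m} \frac{\partial c_{ij}}{\partial z_k}\,f_j + \sum_{j=1}^{m} c_{ij}\,\frac{\partial f_j}{\partial z_k}. \]
If $p\in V_C(f_1,\ldots,f_m)$ lies in a small neighbourhood of the origin then $f_j(p)=0$ for every $j$, so the first sum vanishes and we obtain
\[ D(g_1,\ldots,g_\ell)(p) = C(p)\cdot D(f_1,\ldots,f_m)(p), \]
where $C(p)=[c_{ij}(p)]$ is an $\ell\times m$ matrix. Consequently
\[ \operatorname{rank} D(g_1,\ldots,g_\ell)(p) \leq \operatorname{rank} D(f_1,\ldots,f_m)(p). \]

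Now fix $p\in V_C(f_1,\ldots,f_m)\setminus\{\bO\}$ close enough to the origin. Then $p\in V_C(g_1,\ldots,g_\ell)\setminus\{\bO\}$, and since $V_C(g_1,\ldots,g_\ell)$ has an isolated singularity we get $\operatorname{rank} D(g_1,\ldots,g_\ell)(p)\geq n-1$. Combining this with the inequality above yields $\operatorname{rank} D(f_1,\ldots,f_m)(p)\geq n-1$. Thus the set of points in $V_C(f_1,\ldots,f_m)$ where the rank drops below $n-1$ is contained in $\{\bO\}$ near the origin, which is exactly the definition of $V_C(f_1,\ldots,f_m)$ having an isolated singularity at $\bO$.

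There is no real obstacle: the only subtlety is the justification that the $c_{ij}$ can be taken holomorphic in a common neighbourhood of the origin, which is immediate from the definition of the ideal in $\Opis_{K,0}$ and the fact that finitely many holomorphic germs have a common representative. Everything else is linear algebra applied to the identity obtained by differentiating $g_i=\sum_j c_{ij}f_j$ on the zero set of the $f_j$.
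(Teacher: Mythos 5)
Your proof is correct and follows essentially the same route as the paper: both arguments boil down to the observation that on $V_C(f_1,\ldots,f_m)$ each $\nabla g_i(p)$ is a $\C$-linear combination of the $\nabla f_j(p)$, giving $\operatorname{rank} D(g_1,\ldots,g_\ell)(p)\leq\operatorname{rank} D(f_1,\ldots,f_m)(p)$, from which the rank bound $\geq n-1$ transfers from the $g$'s to the $f$'s. You merely write out the chain-rule computation that the paper states in one line, and you fold the trivial case $V_C(g_1,\ldots,g_\ell)\subset\{\bO\}$ (treated separately in the paper) into the vacuous case of your final sentence.
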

\begin{proof}  Of course $V_C(f_1,\ldots,f_m)\subset V_C(g_1,\ldots,g_\ell)$.
If $V_C(g_1,\ldots,g_\ell)\subset\{\bO\}$ then the conclusion is obvious.

If this is not the case, then $\dim V_C(f_1,\ldots,f_m)\leq\dim V_C(g_1,\ldots,g_\ell)=1$.
For $p\in V_C(f_1,\ldots,f_m)$, each gradient $\nabla g_i(p)$ is a linear combination
of gradients $\nabla f_j(p)$, and then
\[\operatorname{rank}\, D(g_1,\ldots,g_\ell)(p)\leq\operatorname{rank}\, D(f_1,\ldots,f_m)(p).\]
As $V_C(g_1,\dots,g_\ell)$ has an isolated singularity, then
$\operatorname{rank}\, D(g_1,\ldots,g_\ell)(p)=n-1$ for all
$p\in V_C(g_1,\ldots,g_\ell)\setminus\{\bO\}$ lying near the origin, which implies
that $\operatorname{rank}\, D(f_1,\ldots,f_m)(p)\geq n-1$
for $p\in V_C(f_1,\ldots,f_m)\setminus\{\bO\}$. \end{proof} 

\begin{prop}\label{wstawka1}
If $g_1,\ldots,g_{n-1}\in\left< f_1,\ldots,f_m \right>$, $h\in\Opis_{K,0}$ and
\[V_C\left(\frac{\partial(h,g_1,\ldots,g_{n-1})}{\partial(x_1,x_2,\ldots,x_n)},g_1,\ldots,g_{n-1}\right)
\subset\{\bO\}\]
then $V_C(f_1,\ldots,f_m)$ has an isolated singularity at the origin.
\end{prop}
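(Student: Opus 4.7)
The plan is to reduce this to the previous lemma. That lemma tells us that if $g_1,\dots,g_\ell \in \langle f_1,\dots,f_m\rangle$ and $V_C(g_1,\dots,g_\ell)$ has an isolated singularity at the origin, then so does $V_C(f_1,\dots,f_m)$. So the whole task is to verify that $V_C(g_1,\dots,g_{n-1})$ itself has an isolated singularity at $\bO$ under the Jacobian hypothesis.

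First I would dispose of the trivial case: if $V_C(g_1,\dots,g_{n-1}) \subset \{\bO\}$ in a neighbourhood of the origin, the conclusion is immediate. Otherwise, I need to show that for every $p\in V_C(g_1,\dots,g_{n-1})\setminus\{\bO\}$ sufficiently close to $\bO$, one has $\operatorname{rank} D(g_1,\dots,g_{n-1})(p) = n-1$.

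The key observation is a determinant expansion. Consider the $n\times n$ Jacobian matrix $D(h,g_1,\dots,g_{n-1})$: expanding its determinant along the first row gives
\[
\frac{\partial(h,g_1,\dots,g_{n-1})}{\partial(x_1,\dots,x_n)} \;=\; \sum_{i=1}^n (-1)^{i+1}\,\frac{\partial h}{\partial x_i}\,M_i,
\]
where each $M_i$ is the $(n-1)\times(n-1)$ minor of $D(g_1,\dots,g_{n-1})$ obtained by deleting the $i$-th column. Hence if $\operatorname{rank} D(g_1,\dots,g_{n-1})(p) < n-1$, then every $M_i(p) = 0$, which forces the full Jacobian determinant to vanish at $p$. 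Such a $p$ would therefore lie in the set
\[
V_C\!\left(\frac{\partial(h,g_1,\dots,g_{n-1})}{\partial(x_1,\dots,x_n)},\,g_1,\dots,g_{n-1}\right),
\]
which by hypothesis reduces to $\{\bO\}$ near the origin; this contradicts $p\neq \bO$.

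So $V_C(g_1,\dots,g_{n-1})$ has an isolated singularity at $\bO$, and the previous lemma delivers the conclusion. There is no real obstacle here; the only thing to check carefully is the determinant expansion (so that rank drop of $D(g_1,\dots,g_{n-1})$ actually implies vanishing of the full $n\times n$ Jacobian determinant regardless of $h$), and the separation of the two cases $\dim V_C(g_1,\dots,g_{n-1}) = 0$ versus $\dim = 1$.
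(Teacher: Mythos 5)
Your proof is correct and follows the same route as the paper: show that the hypothesis forces $\operatorname{rank} D(g_1,\dots,g_{n-1})(p)=n-1$ for $p\in V_C(g_1,\dots,g_{n-1})\setminus\{\bO\}$ near the origin, hence $V_C(g_1,\dots,g_{n-1})$ has an isolated singularity, and then invoke the preceding Lemma. The only cosmetic difference is that you prove the rank implication via the cofactor expansion along the row of $h$ (contrapositively), whereas the paper states directly that non-vanishing of the $n\times n$ Jacobian determinant at $p$ gives $\operatorname{rank} D(g_1,\dots,g_{n-1})(p)=n-1$ — the same elementary linear-algebra fact.
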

\begin{proof}  If $p\in V_C(g_1,\ldots,g_{n-1})\setminus\{\bO\}$ lies near the origin
then the determinant of
$ D(h,g_1,\ldots,g_{n-1})(p)$ does not vanish,
and then $\operatorname{rank}\, D(g_1,\ldots,g_{n-1})(p)=n-1$.
So $V_C(g_1,\ldots,g_{n-1})$ has an isolated singularity, and one may apply the previous Lemma. \end{proof} 

Let $\Mpis(k,m;\K)$ denote the space of all $k\times m$--matrices with coefficients in $\K$.

\begin{lemma}\label{nr14}
Assume that $S$ is a finite set,  and $v_{s1},\ldots,v_{sm}\in\C^n$,
where $s\in S$, is a finite collection of vectors. Let $V_s$ denote the $\C$-linear space spanned
by $v_{s1},\ldots,v_{sm}$. Assume that each $\dim_C V_s\geq k$.

Then there exists a proper algebraic subset $\Sigma_1\subset\Mpis(k,m;\K)$, defined by polynomials
with real coefficients, such that for every $[a_{tj}]\in\Mpis(k,m;\K)\setminus\Sigma_1$,
each $\C$--linear space  spanned by
\[w_{st}=\sum_{j=1}^m a_{tj} v_{sj},\ \ 1\leq t\leq k,\]
is $k$--dimensional for each $s\in S$. 
\end{lemma}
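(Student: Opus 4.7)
The plan is to reduce the statement to the case $|S|=1$ by a finite-union argument, and for a single $s\in S$ to translate the linear-independence condition on $w_{s1},\ldots,w_{sk}$ into the non-vanishing of some $k\times k$ minor of the complex matrix $V_sA^T$, where $V_s=[v_{s1}|\cdots|v_{sm}]$ and $A=[a_{tj}]$.

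For fixed $s$, the vectors $w_{st}$ are linearly independent over $\C$ if and only if $V_sA^T$ has rank $k$, equivalently some $k\times k$ minor $p_{s,I}(A)$ is nonzero. Each $p_{s,I}$ is a polynomial in the entries of $A$ whose coefficients are minors of $V_s$, hence in general complex, which is the main obstacle: the lemma asks for defining equations with \emph{real} coefficients. I will handle this by the standard device of multiplying $p_{s,I}$ by its conjugate polynomial $\overline{p_{s,I}}$ (obtained by conjugating each coefficient); the product $q_{s,I}:=p_{s,I}\cdot\overline{p_{s,I}}$ then has real coefficients and vanishes whenever $p_{s,I}$ does. Let $\widetilde\Sigma^s$ denote the common zero set of the family $\{q_{s,I}\}_I$ in $\Mpis(k,m;\K)$. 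Using the hypothesis $\dim_\C V_s\geq k$, I pick $k$ linearly independent columns $v_{sj_1},\ldots,v_{sj_k}$ and take the real matrix $A_0$ with $(A_0)_{tj}=\delta_{j,j_t}$; for the corresponding minor index $I_0$ one then has $q_{s,I_0}(A_0)=|p_{s,I_0}(A_0)|^2>0$, so $\widetilde\Sigma^s$ is a proper real-algebraic subset.

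Finally I set $\Sigma_1:=\bigcup_{s\in S}\widetilde\Sigma^s$. As a finite union of proper algebraic subsets of the irreducible affine space $\Mpis(k,m;\K)$ it is itself a proper algebraic subset, and it is cut out by the real-coefficient polynomials $\prod_{s\in S}q_{s,I_s}$ indexed by choices of one $I_s$ per $s$. For $A\notin\Sigma_1$ the definition forces, for every $s\in S$, some $p_{s,I}(A)\neq 0$, i.e.\ $\operatorname{rank}V_sA^T=k$, which is precisely the required $k$-dimensionality of $\operatorname{span}_\C\{w_{s1},\ldots,w_{sk}\}$.
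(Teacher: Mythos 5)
Your proof is correct. The paper states Lemma \ref{nr14} without proof, so there is nothing of the authors' to compare against; your argument is the natural one, and the only genuinely delicate point---that the $k\times k$ minors of $V_sA^T$ are polynomials in $A$ with \emph{complex} coefficients, while the lemma demands a bad set cut out by \emph{real}-coefficient polynomials---is handled correctly by the $q_{s,I}=p_{s,I}\cdot\overline{p_{s,I}}$ device. Two small remarks, neither a gap: the product of a polynomial with its coefficientwise conjugate indeed has real coefficients (the coefficient of each monomial is a Hermitian-symmetric sum), and for $A\in\Mpis(k,m;\K)\setminus\Sigma_1$ the implication $q_{s,I}(A)\neq 0\Rightarrow p_{s,I}(A)\neq 0$ holds simply because a nonzero product has nonzero factors, even though for complex $A$ one does not have $q_{s,I}(A)=|p_{s,I}(A)|^2$; and your witness $A_0$, being a real matrix, belongs to $\Mpis(k,m;\K)$ for both $\K=\R$ and $\K=\C$ and satisfies $q_{s,I_0}(A_0)=|p_{s,I_0}(A_0)|^2>0$, so each $\widetilde\Sigma^s$ is proper in either case.
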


\begin{prop}\label{15}
Assume that  $V_C(f_1,\ldots, f_m)$ 
is a curve having an isolated singularity at the origin.

Then there exists a proper algebraic subset
$\Sigma_2\subset\Mpis_K=\Mpis(n-1,m;\K)$, defined by polynomials with real coefficients,
such that for every $(n-1)\times m$--matrix $a =[a_{tj}]\in\Mpis_K\setminus\Sigma_2$ and
\[g_t^a=\sum_{j=1}^m a_{tj} f_j,\ \ 1\leq t\leq n-1,\]
one has
$\{p\in V_C(f_1,\ldots,f_m)\ |\ \operatorname{rank}\, D(g_1^a,\ldots,g_{n-1}^a)(p)<n-1\}\subset\{\bO\}$.

\end{prop}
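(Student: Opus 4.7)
The plan is to exhibit a single matrix $a_0\in\Mpis_C$ for which the desired rank condition holds on some neighbourhood of $\bO$, and then to upgrade this existence to a generic statement with real-polynomial defining equations by invoking the semicontinuity results of Propositions~\ref{nr101} and~\ref{nr11}.

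The case $V_C(f_1,\ldots,f_m)=\{\bO\}$ is trivial, so I may assume that $V_C(f_1,\ldots,f_m)$ is a $1$-dimensional germ and decompose it into its finitely many irreducible branches $X_1,\ldots,X_r$ at $\bO$. On each $X_i$ I would pick a smooth point $p_i$ close to the origin; by the isolated singularity hypothesis $\rank D(f_1,\ldots,f_m)(p_i)=n-1$, so that the vectors $v_{ij}=\nabla f_j(p_i)$ span an $(n-1)$-dimensional subspace $V_i\subset\C^n$. Lemma~\ref{nr14}, applied with $S=\{1,\ldots,r\}$, these $v_{ij}$, and $k=n-1$, then produces a proper algebraic $\Sigma_1\subset\Mpis$, defined by polynomials with real coefficients, outside which $\rank D(g_1^a,\ldots,g_{n-1}^a)(p_i)=n-1$ for every $i$.

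The main obstacle is promoting these point-by-point statements to a statement about every point of $V_C(f_1,\ldots,f_m)\setminus\{\bO\}$ near $\bO$, and this is where irreducibility does the essential work: for any fixed $a_0\in\Mpis_C\setminus\Sigma_1$, the bad set $B_i(a_0)=\{p\in X_i\mid\rank D(g_1^{a_0},\ldots,g_{n-1}^{a_0})(p)<n-1\}$ is an analytic subset of the irreducible $1$-dimensional germ $X_i$ that omits $p_i$, and is therefore discrete. Shrinking the neighbourhood of $\bO$, I can ensure $B_i(a_0)\subset\{\bO\}$ for every $i$, whence the same inclusion holds for the union over the finitely many branches.

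To finish, let $I^a\subset\Opis_{K,0}$ denote the ideal generated by $f_1,\ldots,f_m$ together with the $(n-1)\times(n-1)$ minors of $D(g_1^a,\ldots,g_{n-1}^a)$. Its generators are polynomial in $a$ with coefficients in $\Opis_{R,0}$, so after enlarging the list of base germs to include the relevant products of partial derivatives of the $f_j$'s, Proposition~\ref{nr101} applies to the family $\{I^a\}$. Corollary~\ref{nr10} converts the bound just established for $a_0$ into $\#(\N^n\setminus\Npis(I^{a_0}))<\infty$, and Propositions~\ref{nr101} and~\ref{nr11} then deliver the desired proper algebraic subset $\Sigma_2\subset\Mpis$, cut out by polynomials with real coefficients, outside of which $\#(\N^n\setminus\Npis(I^a))<\infty$. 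A final appeal to Corollary~\ref{nr10} translates this back into the required rank condition on $V_C(f_1,\ldots,f_m)$.
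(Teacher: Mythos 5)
Your proposal is correct and follows essentially the same route as the paper's own proof: decompose $V_C(f_1,\ldots,f_m)$ into irreducible branches, pick one point per branch, apply Lemma~\ref{nr14} to get a single good matrix $a_0$, use irreducibility to extend the rank condition from the chosen points to the whole punctured branch, and then pass from the single $a_0$ to a generic $a$ via Propositions~\ref{nr101} and~\ref{nr11} applied to the family of ideals $I^a$ together with the equivalence of Corollary~\ref{nr10}. The only differences from the printed proof are cosmetic: you spell out the irreducibility step (a proper analytic subset of an irreducible $1$-dimensional germ is $0$-dimensional, hence lies in $\{\bO\}$ after shrinking) and you note explicitly that the minors of $D(g^a)$ are $\K[a]$-linear combinations of a fixed finite list of germs, which is needed to put the family $\{I^a\}$ into the form required by Proposition~\ref{nr101}; the paper leaves both points implicit.
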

\begin{proof}  For $a=[a_{tj}]\in\Mpis_K$, 
let $J^a\subset\Opis_{K,0}$ denote the ideal generated by $f_1,\ldots,f_m$ and all
$(n-1)\times (n-1)$--minors of $D(g_1^a,\ldots,g_{n-1}^a)$.
Then 
\[\{p\in V_C(f_1,\ldots,f_m)\ |\ \operatorname{rank}\, D(g_1^a,\ldots,g_{n-1}^a)(p)<n-1\}\subset\{\bO\}\]
if and only if $\#(\N^n\setminus\Npis(J^a))<\infty$.

From Proposition \ref{nr101}, there exists a proper algebraic subset $\Sigma_2\subset {\cal M}_K$,
defined by polynomials with real coefficients, such that $\Npis(J^a)$ is constant for
$a\in {\cal M}_K\setminus\Sigma_2$.
By Proposition \ref{nr11}, it is enough to find at least one $a\in\Mpis_K$ such that
$\# (\N^n\setminus\Npis(J^a))<\infty$.

The curve $V_C(f_1,\ldots,f_m)$ is a finite union of complex irreducible curves
$\{ C_s \}$, and each $C_s\setminus\{\bO\}$ is locally biholomorphic to $\C\setminus\{\bO\}$.
Take $p_s\in\C_s\setminus\{\bO\}$ near the origin. Put
\[v_{sj}=\nabla f_j(p_s),\ 1\leq j\leq m.\]
The dimension of the linear space spanned by $v_{s1},\ldots, v_{sm}$ equals\\
$\operatorname{rank}\, D(f_1,\ldots,f_m)(p_s)= n-1$.
By Lemma \ref{nr14} there exists a proper algebraic subset
$\Sigma_1\subset\Mpis_K$, defined by polynomials with real coefficients,
such that for every $a=[a_{tj}]\in\Mpis_K\setminus\Sigma_1$
the dimension of the space spanned by all
\[\sum_{j=1}^m  a_{tj}\nabla f_j(p_s)=\nabla g_t^a(p_s)\]
equals $n-1$ for each $s$. If that is the case then
 $\operatorname{rank}\, D(g_1^a,\ldots,g_{n-1}^a)(p_s)=n-1$,
and then the same equality holds for all $s$ and all $p\in\C_s\setminus\{\bO\}$ lying near $\bO$,
i.e. for all $p\in V_C(f_1,\ldots,f_m)\setminus\{\bO\}$
sufficiently close to the origin. \end{proof}

\begin{theorem}\label{nr17}
Assume that $V_C(f_1,\ldots,f_m)$ is a curve with an isolated 
singularity.

Then there exists a proper algebraic subset
$\Sigma_3\subset\Mpis(n-1,m;\K)\times\K^m=\Mpis_K\times\K^m$,
defined by polynomials with real coefficients, such that for every
$(a,b)=\left( [a_{tj}],(b_1,\ldots,b_m)\right)\not\in\Sigma_3$ and
\[g_t^a=\sum_{j=1}^m a_{tj} f_j,\ \ 1\leq t\leq n-1,\]
\[h^b=b_1 f_1+\cdots+b_m f_m\ ,\]
\begin{itemize}
\item[(i)] $V_C(g_1^a,\ldots,g_{n-1}^a)$ is a curve with an isolated singularity at the origin,
\item[(ii)] $V_C(g_1^a,\ldots,g_{n-1}^a,h^b)=V_C(f_1,\ldots,f_m)$.

\end{itemize}
\end{theorem}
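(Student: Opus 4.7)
The set $\Sigma_3$ will be the union of three proper algebraic subsets of $\Mpis_K\times\K^m$, controlling in turn the rank condition from Proposition \ref{15}, condition (ii), and condition (i); in each case I reduce the problem to exhibiting a single good pair $(a,b)$ and invoke the semicontinuity machinery of Section 2. First, Proposition \ref{15} supplies $\Sigma_2\subset\Mpis_K$ such that for every $a\notin\Sigma_2$ one has $\operatorname{rank}D(g_1^a,\ldots,g_{n-1}^a)(p)=n-1$ along $V_C(f_1,\ldots,f_m)\setminus\{\bO\}$ near the origin. It follows that the germ $V_C(g_1^a,\ldots,g_{n-1}^a)$ decomposes as $V_C(f_1,\ldots,f_m)\cup X_a$, where $X_a$ is a finite union of irreducible $1$-dimensional analytic germs meeting $V_C(f_1,\ldots,f_m)$ only at $\bO$.

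For (ii), for each component $Y$ of $X_a$ I pick a point $q_Y\in Y\setminus V_C(f_1,\ldots,f_m)$ close to $\bO$; since some $f_j(q_Y)\neq 0$, the linear condition $h^b(q_Y)=0$ defines a proper hyperplane in $\K^m$. Taking $b$ in the complement of the finite union of these hyperplanes yields a vector for which $h^b$ does not vanish identically on any $Y$, so $X_a\cap V_C(h^b)=\{\bO\}$ as germs, and Lemma \ref{nr16} gives $V_C(g_1^a,\ldots,g_{n-1}^a,h^b)=V_C(f_1,\ldots,f_m)$ for this pair. Having produced one good $(a,b)$, Proposition \ref{nr11} applied to the parametric ideals $\left<g_1^a,\ldots,g_{n-1}^a,h^b\right>\subset\left<f_1,\ldots,f_m\right>$ furnishes a proper algebraic subset $\Sigma^{(ii)}\subset\Mpis_K\times\K^m$ off which (ii) holds.

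For (i), the ideal $K^a\subset\Opis_{K,0}$ generated by $g_1^a,\ldots,g_{n-1}^a$ and all $(n-1)\times(n-1)$-minors of $D(g_1^a,\ldots,g_{n-1}^a)$ depends polynomially on $a$, and by Proposition \ref{nr13} condition (i) is equivalent to $\#(\N^n\setminus\Npis(K^a))<\infty$. A routine variant of Proposition \ref{nr101} yields a proper algebraic $\Sigma^{(i)}\subset\Mpis_K$ off which $\Npis(K^a)$ is constant, so it suffices to exhibit one $a^*$ with $V_C(g_1^{a^*},\ldots,g_{n-1}^{a^*})$ isolated-singular. I find such an $a^*$ by a dimension count in $\Mpis_K\times\C^n$: off $V_C(f_1,\ldots,f_m)$ the $n-1$ equations $g_i^a(p)=0$ are linearly independent in $a$ (codimension $n-1$), while the rank-drop condition on the $(n-1)\times n$-Jacobian has codimension $2$ in matrix space, so the bad incidence variety projects to a proper subset of $\Mpis_K$; points of $V_C(f_1,\ldots,f_m)$ are already controlled by $\Sigma_2$. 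Setting $\Sigma_3=((\Sigma_2\cup\Sigma^{(i)})\times\K^m)\cup\Sigma^{(ii)}$ completes the construction.

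The main obstacle is the last dimension count, because at points $p$ where $\operatorname{rank}D(f_1,\ldots,f_m)(p)<n-1$ the rank of $D(g_1^a,\ldots,g_{n-1}^a)(p)$ automatically drops for every $a$, and the isolated-singularity hypothesis controls this locus only on $V_C(f_1,\ldots,f_m)$. To close the argument one has to bound the dimension of the rank-drop locus of $D(f_1,\ldots,f_m)$ off $V_C(f_1,\ldots,f_m)$ near $\bO$ by a separate argument, relying on the generic codimension formula for rank loci of $m\times n$ matrices, and then verify that the resulting $\Sigma^{(i)}$, $\Sigma^{(ii)}$ are cut out by polynomials with real coefficients, as Propositions \ref{nr101} and \ref{nr11} guarantee.
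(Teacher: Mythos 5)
Your high-level framework is the same as the paper's: exhibit a single good pair $(a,b)$ and then promote it to a generic statement via Propositions~\ref{nr101} and \ref{nr11}, with the defining polynomials guaranteed to have real coefficients. The gap lies entirely in how you propose to produce the one good pair.

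For (ii), you deduce from $a\notin\Sigma_2$ that $V_C(g_1^a,\ldots,g_{n-1}^a)$ decomposes as $V_C(f_1,\ldots,f_m)\cup X_a$ with $X_a$ a \emph{finite union of $1$-dimensional germs}. This does not follow. Membership in $\Mpis_K\setminus\Sigma_2$ controls the rank of $D(g_1^a,\ldots,g_{n-1}^a)$ only at points of $V_C(f_1,\ldots,f_m)\setminus\{\bO\}$; it yields a decomposition $V_C(g^a)=V_C(f)\cup X_a$ with $X_a\cap V_C(f)=\{\bO\}$, but says nothing about $\dim X_a$. Since $V_C(g^a)$ is cut out by $n-1$ equations, components of $X_a$ can have any dimension $\geq 1$. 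If some component $Y$ has $\dim Y\geq 2$ (always passing through $\bO$, as these are germs), then $Y\cap V_C(h^b)$ has dimension $\geq 1$ for \emph{every} $b$, so choosing $b$ to make $h^b\not\equiv 0$ on $Y$ does not give $Y\cap V_C(h^b)=\{\bO\}$. The $1$-dimensionality of $X_a$ is essentially the content of (i), so your argument for (ii) silently presupposes (i). The paper avoids this by proving (ii) without any dimension hypothesis on $V_C(g^a)$: it stratifies $V_C(h^b)\setminus V_C(f_j)$ into manifolds $V_s$ of dimension $\leq n-1$ and applies Sard to $A|_{V_s\times\mathcal{D}}$, which forces $V_C(g^a)\cap V_s$ to be discrete because $\dim(V_s\times\mathcal{D})\leq\dim\Mpis_C$.

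For (i), you flag the obstacle yourself, and it is fatal as stated. At any point $p$ with $\operatorname{rank}D(f_1,\ldots,f_m)(p)<n-1$, the matrix $D(g_1^a,\ldots,g_{n-1}^a)(p)=a\cdot D(f_1,\ldots,f_m)(p)$ has rank $<n-1$ for \emph{every} $a$, so your incidence-variety dimension count cannot push the fiber below $n-1$ there. The proposed repair, "the generic codimension formula for rank loci," is not available: $D(f_1,\ldots,f_m)$ is a specific analytic matrix, not a generic one, and its rank-drop locus off $V_C(f_1,\ldots,f_m)$ may have any dimension up to $n-1$, unconstrained by the isolated-singularity hypothesis (which governs only the intersection with $V_C(f_1,\ldots,f_m)$). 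The paper's Sard argument on the map $A\colon U\times\mathcal{D}\to\Mpis_C$ avoids this entirely: the explicit computation of $\partial a_{t1}/\partial z_i$ shows that surjectivity of $DA$ at $(p,d)$ is \emph{equivalent} to $\operatorname{rank}D(g_1^a,\ldots,g_{n-1}^a)(p)=n-1$, so for a regular value $a$ the bad locus $\{p\in U\mid g^a(p)=0,\ \operatorname{rank}D(g^a)(p)<n-1\}$ is simply empty; one never needs to estimate the dimension of the rank-drop locus of $D(f)$ off $V_C(f)$. Both of your gaps are resolved at once by this one transversality construction, which is the real content of the paper's proof and is missing from yours.
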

\begin{proof}   Let $J_K$ denote the ideal in $\Opis_{K,0}$ generated by
$f_1,\ldots,f_m$. For $(a,b)\in\Mpis_K\times\K^m$ let $I^a$ denote the ideal generated
by $g_1^a,\ldots,g_{n-1}^a$ and all $(n-1)\times(n-1)$--minors of $D(g_1^a,\ldots,g_{n-1}^a)$,
and let $I^{a,b}$ denote the one generated by 
germs $g_1^a,\ldots,g_{n-1}^a,h^b$.
 
By Proposition \ref{nr101} there exists a proper algebraic set $\Sigma_3\subset\Mpis_K\times\K^m$,
defined by polynomials with real coefficients, such that
$\Npis(I^{a})$, as well as $\Npis(I^{a,b})$, is constant for all $(a,b)\not\in\Sigma_3$.

By Proposition \ref{nr11}, \ref{nr13} and Lemma \ref{nr16}, it is enough to find at least one
$(a,b)\in\Mpis_C\times\C^m$ such that
and $\# (\N^n\setminus \Npis(I^a))<\infty $ and $\# (\Npis(J_C)\setminus\Npis(I^{a,b}))<\infty$.\\
{\em (i)} Let $U=\{p\in\C^n\ |\ f_1(p)\neq 0\}$. Let ${\cal D}$ denote the space of all complex
$(n-1)\times (m-1)$--matrices $d=[d_{tj}]$, where $1\leq t\leq n-1$, $2\leq j\leq m$,
and let
\[a_{t1}(p,d)=-\left(\sum_{j=2}^m d_{tj} f_j(p)\right)\cdot (f_1(p))^{-1},\ \ \ 
a_{tj}(p,d)=d_{tj}\ .\]
Define a mapping
\[U\times {\cal D}\ni (p,d)\, \mapsto\, A(p,d)=[a_{tj}(p,d)]\in\Mpis_C\ .\]
Let $a=[a_{tj}]\in\Mpis_C$ be a regular value of $A$. Then $(p,d)\in A^{-1}(a)$ if and only if
$p\in U$, $a_{tj}=d_{tj}$ for $1\leq t\leq n-1$, $2\leq j\leq m$, and
\[g_t^a(p)=\sum_{j=1}^m a_{tj} f_j(p)=0\ .\]

If that is the case then the rank of the Jacobian matrix of $A$ at $(p,d)$
equals $(n-1)m=n-1+\dim {\cal D}$, and then
\[\rank\, \left[\frac{\partial a_{t1}}{\partial z_i}(p,d)\right]=n-1,\]
where $1\leq t\leq n-1, 1\leq i\leq n$.
At $(p,d)$ we have
\[-\frac{\partial a_{t1}}{\partial z_i}\cdot f_1^2=
\left( \sum_{j=2}^m d_{tj}\frac{\partial f_j}{\partial z_i}\right)f_1-\left(\sum_{j=2}^m d_{tj} f_j\right)\frac{\partial f_1}{\partial z_i}=\]
\[\left(\sum_{j=2}^m a_{tj}\frac{\partial f_j}{\partial z_i}\right)f_1+a_{t1}
\cdot f_1\cdot\frac{\partial f_1}{\partial z_i}=
\left(\sum_{j=1}^m a_{tj}\frac{\partial f_j}{\partial z_i}\right)\cdot f_1=
\frac{\partial g_t^a}{\partial z_i}\cdot f_1 .\]
As $f_1(p)\neq 0$, if $g_1^a(p)=\ldots=g_{n-1}^a(p)=0$ then

\[\operatorname{rank}\, D(g_1^a,\ldots,g_{n-1}^a)(p)=
\operatorname{rank}\, \left[ \frac{\partial g_t^a}{\partial z_i}(p) \right]= n-1\ .\]

By the Sard theorem, one may choose $a\in\Mpis_C$ such that the above condition holds for all points
\[p\in V_C(g_1^a,\ldots,g_{n-1}^a)\setminus V_C(f_1,\ldots,f_m)=\]
\[\bigcup_{k=1}^m  \left( V_C(g_1^a,\ldots,g_{n-1}^a)\setminus V_C(f_k)\right).\]

Let $\Sigma_2\subset\Mpis_C$ be as in Proposition \ref{15}. One may choose $a\not\in\Sigma_2$, so that the origin is isolated in
\[\{p\in V_C(f_1,\ldots,f_m)\ |\ \operatorname{rank}\, D(g_1^a,\ldots,g_{n-1}^a)(p)<n-1\}.\]
Hence the curve $V_C(g_1^a,\ldots,g_{n-1}^a)$ has an isolated singularity at the origin,
i.e. $\#(\N^n\setminus\Npis(I^a))<\infty$.\\[1em]
{\em (ii)} Take any nontrivial $h^b=b_1 f_1+\cdots+ b_m f_m$, $b_i\in\K$.
Then $\dim V_C(h^b)=n-1$.
There exists a finite collection  $\{V_s\}$ of analytic manifolds such that
$V_C(h^b)\setminus V_C(f_1)=\bigcup_s V_s$. In particular, each
$\dim V_s\times {\cal D}\leq n-1+(n-1)(m-1)=\dim \Mpis_C$.

We may assume that a matrix $a\in\Mpis_C$ is a regular value for each restricted mapping
$A|V_s\times{\cal D}$. 
As there is a one-to-one correspondence between points in $A^{-1}(a)\cap V_s\times{\cal D}$
and  $V_C(g_1^a,\ldots, g_{n-1}^a)\cap V_s$, both are either void or discrete.
In consequence,
$V_C(g_1^a,\ldots,g_{n-1}^a,h^b)\setminus V_C(f_1)=\bigcup_s V_C(g_1^a,\ldots,g_{n-1}^a)\cap V_s$
is discrete too. Applying the same arguments for each $V_C(h^b)\setminus V_C(f_j)$, we can find
a matrix $a\in\Mpis_C$ and associated germs $g_1^a,\ldots,g_{n-1}^a$ such that
$V_C(g_1^a,\ldots,g_{n-1}^a,h^b)\setminus V_C(f_1,\ldots,f_m)$ is discrete.

Then the corresponding germs at the origin satisfy an inclusion
\[V_C(g_1^a,\ldots,g_{n-1}^a,h^b)\subset V_C(f_1,\ldots,f_m).\]
As $g_1^a,\ldots,g_{n-1}^a,h^b\in\left< f_1,\ldots, f_m\right>$, then
$V_C(g_1^a,\ldots,g_{n-1}^a,h^b)=V_C(f_1,\ldots,f_m)$.
By Lemma \ref{nr16},
\[\dim_K \left<f_1,\ldots,f_m\right>/\left<g_1^a,\ldots,g_{n-1}^a,h^b\right>=\#(\Npis(J_C)\setminus\Npis(I^{a,b}))<\infty.\] \end{proof}

\section{The number of half-branches}
Assume that $f_1,\ldots,f_m\in\bm_0\cap\Opis_{R,0}$,
$\operatorname{rank}\, D(f_1,\ldots,f_m)(\bO)<n-1$,
and $V_C(f_1,\ldots,f_m)$ is a curve with an isolated singularity at the origin.

By Theorem \ref{nr17} there exists $g_1,\ldots,g_{n-1},h\in\left< f_1,\ldots,f_m\right>\cap\Opis_{R,0}$
such that $V_C(g_1,\ldots,g_{n-1})$ is a curve with an isolated singularity,
$\operatorname{rank}\, D(g_1,\ldots,g_{n-1})(\bO)<n-1$,
and $V_C(g_1,\ldots,g_{n-1},h)=V_C(f_1,\ldots,f_m)$.
In particular, the number of half-branches of $V_R(f_1,\ldots,f_m)$ emanating from the origin
is the same as the number $b_0$ of half-branches of $V_R(g_1,\ldots,g_{n-1})$
on which $h$ vanishes.

Now we recall the formula for $b_0$ presented in \cite{szafraniec14}.
Let $J$ denote the ideal generated by $f_1,\ldots,f_m$.
Let $J_k$, where $k=1,2$, denote the ideal generated by $g_1,\ldots,g_{n-1},h^k$.
Of course, $V_C(J)=V_C(J_1)=V_C(J_2)$. By Lemma \ref{nr16},
$\dim_R (J/J_1)<\infty$ and $\dim_R(J/J_2)<\infty$.
Since $J_2\subset J_1\subset J$, $\dim_R (J_1/J_2)<\infty$,
and then $\# (\Npis(J_1)\setminus\Npis(J_2))<\infty$.

Put
$\xi = 1+\max\, \{|\beta|\}-\min\, \{|\alpha|\}$,
where $\beta\in\Npis(J_1)\setminus\Npis(J_2)$ and $\alpha\in\Bpis(J_1)\setminus\Npis(J_2)$.
(We take $\xi=1$ if $\Bpis(J_1)=\Bpis(J_2)$.)
Let $k>\xi$ be an even positive integer,
and let $\omega:\R^n,\bO\rightarrow\R^n,\bO$ be a non-negative polynomial
which is $k$--flat at the origin
and $V_R(g_1,\ldots,g_{n-1},\omega)=\{\bO\}$.
Let
\[h_+=\frac{\partial(h+\omega,g_1,\ldots,g_{n-1})}{\partial(x_1,x_2,\ldots,x_n)},\]
\[h_-=\frac{\partial(h-\omega,g_1,\ldots,g_{n-1})}{\partial(x_1,x_2,\ldots,x_n)},\]
\[H_+=(h_+,g_1,\ldots,g_{n-1}):\R^n,\bO\rightarrow \R^n,\bO\ ,\]
\[H_-=(h_-,g_1,\ldots,g_{n-1}):\R^n,\bO\rightarrow \R^n,\bO\ .\]

As an immediate consequence of Theorem 2.5 \cite{szafraniec14} we get

\begin{theorem}\label{nr18}
Assume that $f_1,\ldots,f_m\in\bm_0\cap\Opis_{R,0}$,
$\operatorname{rank}\, D(f_1,\ldots,f_m)(\bO)<n-1$,
and $V_C(f_1,\ldots,f_m)$ is a curve having an isolated singularity at the origin.

Then the origin is isolated in $H_+^{-1}(\bO)$ and $H_-^{-1}(\bO)$, so that the local
topological degree of $\deg_0(H_+)$ and $\deg_0(H_-)$ is defined.
Moreover,
\[b_0=\deg_0(H_+)-\deg_0(H_-)\ ,\]
where $b_0$ is the number of real half-branches in $V_R(f_1,\ldots,f_m)$
emanating from the origin.
\end{theorem}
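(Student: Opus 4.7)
The plan is to present this theorem as an immediate corollary of Theorem~2.5 of \cite{szafraniec14}, which is formulated precisely for a complete-intersection triple $(g_1,\ldots,g_{n-1},h)$. The first step is to produce such a reduction. Applying Theorem~\ref{nr17} to $f_1,\ldots,f_m$, I obtain germs $g_1,\ldots,g_{n-1},h\in\left<f_1,\ldots,f_m\right>\cap\Opis_{R,0}$ for which $V_C(g_1,\ldots,g_{n-1})$ is a curve with an isolated singularity at $\bO$ and $V_C(g_1,\ldots,g_{n-1},h)=V_C(f_1,\ldots,f_m)$. Since each $g_t$ is an $\R$-linear combination of $f_1,\ldots,f_m$, the assumption $\rank D(f_1,\ldots,f_m)(\bO)<n-1$ transfers to $\rank D(g_1,\ldots,g_{n-1})(\bO)<n-1$, as demanded by the cited theorem.

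Next, I observe that the complex germ identity $V_C(g_1,\ldots,g_{n-1},h)=V_C(f_1,\ldots,f_m)$ yields the analogous identity of real germs, so that a real half-branch of $V_R(f_1,\ldots,f_m)$ at $\bO$ is exactly a half-branch of $V_R(g_1,\ldots,g_{n-1})$ on which $h$ vanishes. Hence $b_0$ is read off from the triple $(g_1,\ldots,g_{n-1},h)$. Moreover, setting $J=\left<f_1,\ldots,f_m\right>$ and $J_k=\left<g_1,\ldots,g_{n-1},h^k\right>$ for $k=1,2$, Lemma~\ref{nr16} gives $\dim_R(J/J_k)<\infty$, and the chain $J_2\subset J_1\subset J$ then forces $\dim_R(J_1/J_2)<\infty$. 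Consequently $\Npis(J_1)\setminus\Npis(J_2)$ is finite, the integer $\xi$ appearing in the setup is well defined, and a non-negative $k$-flat polynomial $\omega$ with $V_R(g_1,\ldots,g_{n-1},\omega)=\{\bO\}$ exists (for instance $(x_1^2+\cdots+x_n^2)^{k/2}$ with $k$ even and $k>\xi$); with this choice $H_+$ and $H_-$ are well-defined.

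At this point the data $(g_1,\ldots,g_{n-1},h,\omega)$ satisfies every hypothesis of Theorem~2.5 of \cite{szafraniec14}, and invoking that theorem directly delivers both the isolation of $\bO$ in each of $H_\pm^{-1}(\bO)$ and the equality
\[b_0=\deg_0(H_+)-\deg_0(H_-).\]
The principal obstacle is organizational rather than analytic: one must check, item by item, that the generic triple produced by Theorem~\ref{nr17} together with the finiteness data from Lemma~\ref{nr16} and Corollary~\ref{nr9} matches the exact list of assumptions in the cited theorem, so that no hidden genericity requirement is quietly violated. Once that verification is complete, the present theorem becomes a translation of the cited statement into the notation of this paper.
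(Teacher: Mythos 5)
Your proposal matches the paper's own argument: apply Theorem~\ref{nr17} to produce the triple $(g_1,\ldots,g_{n-1},h)$ with $\rank D(g_1,\ldots,g_{n-1})(\bO)<n-1$ and $V_C(g_1,\ldots,g_{n-1},h)=V_C(f_1,\ldots,f_m)$, verify via Lemma~\ref{nr16} and the chain $J_2\subset J_1\subset J$ that $\dim_R(J_1/J_2)<\infty$ so $\xi$ and $\omega$ are well defined, and then invoke Theorem~2.5 of \cite{szafraniec14}. The paper presents the theorem as an immediate consequence of that cited result after exactly this setup, so your reduction is the intended one.
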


\begin{prop}\label{nr19}
If $\dim \left< f_1,\ldots,f_m\right> /\left< g_1,\ldots,g_{n-1}\right>  <\infty$
and both $V_C(f_1,\ldots,f_m)$, $V_C(g_1,\ldots,g_{n-1})$ have an isolated singularity
at the origin then $b_0=2\deg_0(H_1)$, where
\[H_1=\left(\frac{\partial(\Omega,g_1,\ldots,g_{n-1})}{\partial(x_1,x_2,\ldots,x_n)},g_1,\ldots,g_{n-1}\right):\R^n\bO\rightarrow\R^n,\bO\]
is a mapping having an isolated zero at $\bO$ and $\Omega=x_1^2+\ldots+x_n^2$. 
\end{prop}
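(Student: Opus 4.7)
The plan is to apply Theorem~\ref{nr18} with the auxiliary data $h = 0$ and $\omega = \Omega$. The key observation is that the hypothesis $\dim_R \langle f_1,\ldots,f_m\rangle/\langle g_1,\ldots,g_{n-1}\rangle < \infty$ combined with Corollary~\ref{nr9} forces the equality $V_C(f_1,\ldots,f_m) = V_C(g_1,\ldots,g_{n-1})$. Hence taking $h = 0 \in \langle f_1,\ldots,f_m\rangle$ one has $V_C(g_1,\ldots,g_{n-1},h) = V_C(f_1,\ldots,f_m)$ automatically, and the rank condition $\operatorname{rank}\,D(g_1,\ldots,g_{n-1})(\bO) < n-1$ follows because each $\nabla g_i(\bO)$ is an $\R$-linear combination of the gradients $\nabla f_j(\bO)$ (as $f_j(\bO) = 0$), whose rank is $<n-1$ by the standing hypothesis of Section~4.

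Next I would verify that $\Omega$ plays the role of $\omega$ in Theorem~\ref{nr18}. With $h = 0$ the ideals $J_1 = \langle g_1,\ldots,g_{n-1},h\rangle$ and $J_2 = \langle g_1,\ldots,g_{n-1},h^2\rangle$ both coincide with $\langle g_1,\ldots,g_{n-1}\rangle$, so $\Bpis(J_1) = \Bpis(J_2)$ and the quantity $\xi$ equals $1$. Any even $k > 1$ is admissible, so $k = 2$ works. The form $\Omega = x_1^2+\cdots+x_n^2$ is non-negative, $2$-flat at the origin, and trivially satisfies $V_R(g_1,\ldots,g_{n-1},\Omega) = \{\bO\}$, so it is a legitimate choice of $\omega$.

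Finally, I would substitute $h = 0$ and $\omega = \Omega$ into the definitions of $h_\pm$. Multilinearity of the determinant in its first row yields $h_- = -h_+$, so $H_+ = H_1$ while $H_- = \sigma \circ H_1$, where $\sigma(y_1,y_2,\ldots,y_n) = (-y_1,y_2,\ldots,y_n)$ is the reflection with $\deg\sigma = -1$. Multiplicativity of the local degree under composition gives $\deg_0(H_-) = -\deg_0(H_1)$, and Theorem~\ref{nr18} then delivers
\[b_0 = \deg_0(H_+) - \deg_0(H_-) = 2\deg_0(H_1).\]
The isolated-zero property of $H_1$ is inherited from Theorem~\ref{nr18}. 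There is no real obstacle here: once one recognizes that the hypothesis permits the choice $h = 0$, the entire proof is a direct unpacking of definitions together with the $\pm$ symmetry of the Jacobian in its first row.
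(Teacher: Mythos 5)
Your proof is correct, but it takes a genuinely different route from the paper's. The paper's proof is very short: it invokes Lemma~\ref{nr16} (with $h=0$) to conclude that $b_0$ equals the number of half-branches of $V_R(g_1,\ldots,g_{n-1})$, and then cites the Aoki--Fukuda--Nishimura formula \cite{aokietal1,aokietal2} for complete intersection curves to get $b_0=2\deg_0(H_1)$ directly. You instead treat Proposition~\ref{nr19} as a corollary of Theorem~\ref{nr18}: you verify that all the hypotheses of Theorem~\ref{nr18} are met with the degenerate choices $h=0$ and $\omega=\Omega$ (in particular $J_1=J_2=\langle g_1,\ldots,g_{n-1}\rangle$, so $\xi=1$, $k=2$ works, and $\Omega\in\bm_0^2$ is indeed $2$-flat in the paper's sense while $V_R(g_1,\ldots,g_{n-1},\Omega)=\{\bO\}$ trivially), and then exploit the linearity of the Jacobian determinant in its first row to obtain $h_-=-h_+$, hence $\deg_0(H_-)=-\deg_0(H_+)=-\deg_0(H_1)$ and $b_0=2\deg_0(H_1)$. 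The check that $\operatorname{rank}\,D(g_1,\ldots,g_{n-1})(\bO)<n-1$ (from $g_i\in\langle f_1,\ldots,f_m\rangle\subset\bm_0$ and the standing Section~4 hypothesis) is also correct and necessary. Your approach is more self-contained: it derives Proposition~\ref{nr19} entirely from the machinery already set up in the paper, rather than from an external reference, and it makes explicit why the classical Aoki--Fukuda--Nishimura formula appears as the special case $h=0$ of the signed-difference formula in Theorem~\ref{nr18}. The paper's citation route is shorter and avoids re-verifying the $\xi$/$\omega$ bookkeeping, but yours is the more illuminating argument.
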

\begin{proof}  By Lemma \ref{nr16} , $b_0$ equals the number of half-branches
in  $V_R(g_1,\ldots,g_{n-1})$ emanating from the origin. So according to \cite{aokietal1,aokietal2} ,
$b_0=2\deg_0(H_1)$. \end{proof} 

\begin{ex} Let $f_1(x,y)=x^3$, $f_2(x,y)=x(x-y)$, and let $I_1=\left< f_1,f_2,y\right>$.
One may check that $\Bpis(I_1)=\{(2,0),(0,1)\}$, so that $\#(\N^2\setminus\Npis(I_1))=2$.
By Corollary \ref{nr10} and Proposition \ref{nr12}, $\dim V_C(f_1,f_2)\leq 1$.

Let $I_2$ be the ideal generated by $f_1,f_2$ and all $1\times 1$-minors of $D(f_1,f_2)$, i.e.
$I_2=\left< f_1,f_2,3x^2,2x-y,-x  \right>$. One may check that
$\Bpis(I_2)=\{(1,0),(0,1)\}$,
so that $\#(\N^2\setminus\Npis(I_2))=1$. By Proposition \ref{nr13},
$V_C(f_1,f_2)$ has an isolated singularity at the origin.

Let $J=\left< f_1,f_2\right> $. One may check that $\Bpis(J)=\{(2,0),(1,2)\}$.

Take $g_1=f_1-6 f_2$. Let $I_3$ be the ideal generated by $g_1$ and all
$1\times 1$-minors of $D(g_1)$, i.e. $I_3=\left< g_1,3x^2-12 x+6 y, 6 x\right>$.
One may check that $\Bpis(I_3)=\{(1,0),(0,1)\}$, so that
$V_C(g_1)$ has an isolated singularity at the origin.

Take $h=f_1+5 f_2$. Let $J_1=\left<g_1,h\right>$, $J_2=\left< g_1,h^2\right>$.
One may check that
$\Bpis(J_1)=\{(2,0),(1,2)\}$, $\Bpis(J_2)=\{(2,0),(1,5)\}$.
We have $J_1\subset J$ and $\Npis(J)\setminus\Npis(J_1)=\emptyset$, so
that $\dim_R (J/J_1)=0$. By Lemma \ref{nr16},
$V_C(f_1,f_2)=V_C(g_1,h)$. Moreover, $\Bpis(J_1)\setminus\Npis(J_2)=\{(1,2)\}$.

As $\Npis(J_1)\setminus\Npis(J_2)=\{(1,2),(1,3),(1,4)\}$, then
\[\max \{|\beta|\}=5,\mbox{ where }\beta\in\Npis(J_1)\setminus\Npis(J_2),\]
\[\min\{|\alpha|\}=3, \mbox{ where }\alpha\in\Bpis(J_1)\setminus\Npis(J_2),\]
\[\xi=1+5-3=3\ .\]
Take $k=4>\xi=3$.  Of course $\omega=x^4+y^4$ is a non-negative polynomial which is $4$-flat at the origin
and $V_R(g_1,\omega)=\{\bO\}$. Set
\[h_+=\frac{\partial(h+\omega,g_1)}{\partial(x,y)},\ \ h_-=\frac{\partial(h-\omega,g_1)}{\partial(x,y)},\]
\[H_+=(h_+,g_1),\ \ H_-=(h_-,g_1).\]
Using the computer program written by Andrzej Łęcki one may compute
\[\deg_0(H_+)=1,\ \ \deg_0(H_-)=-1\ .\]
According to Theorem \ref{nr18}, there are two half-branches in $V_R(f_1,f_2)$ emanating from the origin.

It is worth to notice that for any $g=a f_1+b f_2$, where $a,b\in\R$,
if $b\neq 0$ then there are four half-branches in $V_R(g)$ emanating from the origin.
If $b=0$ then $V_C(g)$ does not have an isolated singularity at the origin. 
So in both cases one cannot apply the apparently simpler Proposition \ref{nr19}. 
\end{ex}

\begin{ex} Let $v=(v_1,v_2,v_3)=(x^2-y^2z,y^2-xz,z^3+x^3)$, $w=(w_1,w_2,w_3)=(z^2+xy,x^2+yz^2,y^3-x^2z)$ be vector fields in $\R^3$. Vectors $v(p)$ and $w(p)$ are co--linear if and only if $f_1=v_2w_3-v_3w_2$, $f_2=v_1w_3-v_3w_1$, $f_3=v_1w_2-v_2w_1$ vanish at $p$. Put $g_1=f_1+f_2+f_3$, $g_2=f_1-f_2$, $h=f_3$. 

Let $J_1=\left< g_1,g_2,h \right>$, $J_2=\left< g_1,g_2,h^2\right>$. One may check that\\
$\Bpis(J_1)=\{ (4,0,0),(2,3,0),(1,4,0),(0,5,1) \}$,\\ 
$\Bpis(J_2)=\{ (4,0,0),(2,3,0),(1,6,0),(1,5,2),(0,7,2),(1,4,5),(0,6,5),(0,5,6),$\\ $(0,12,1) \}$,\\
so that
\[\max\{|\beta|\}=12,\mbox{ where }\beta\in\Npis(J_1)\setminus\Npis(J_2),\]
\[\min\{|\alpha|\}=5,\mbox{ where }\alpha\in\Bpis(J_1)\setminus\Npis(J_2),\]
\[\xi=1+12-5=8.\]
Take $k=10>\xi=8$. Of course, $\omega=x^{10}+y^{10}+z^{10}$ is a non-negative polynomial
which is 10-flat at the origin and $V_R(g_1,g_2,\omega)=\{\bO\}$. Set $h_+,h_-,H_+,H_-$ as in
Theorem \ref{nr18}. One may check that $\dim\Opis_{C,0}/\left< h_{\pm},g_1,g_2\right><\infty$, so that
$V_C(h_{\pm},g_1,g_2)=\{\bO\}$. By Proposition \ref{wstawka1}, $V_C(g_1,g_2)$, as well as $V_C(f_1,f_2,f_3)$,
has an isolated singularity at the origin.

Using the computer program one may compute $\deg_0(H_+)=3$, $\deg_0(H_-)=-1$.
According to Theorem \ref{nr18}, there are 4 half-branches emanating from the origin
in the set where vector fields $v$ and $w$ are co-linear.
\end{ex}

\section{Germs from $\R^2$ to $\R^3$} \label{sec_ip} 

Mappings from $(\K^2,\bO)$ to $(\K^3,\bO)$ are a natural object of study in the theory of singularities.
In \cite{mond1} Mond has classified simple smooth germs from $(\R^2,\bO)$ to $(\R^3,\bO)$,
in \cite{mond2} he was investigating multiple points of complex germs from
$(\C^2,\bO)$ to $(\C^3,\bO)$. Some constructions presented in this section are similar
to the ones in \cite{mond2}.

In \cite{mararnunoballesteros}   Marar and Nu\~{n}o-Ballesteros study finitely determined map germs $u$ from
$(\R^2,\bO)$ to $(\R^3,\bO)$ and the curve obtained as the intersection of its image
with a sufficiently small sphere centered at the origin (the associated doodle
of $u$). The set of singular points of the image of $u$ is the closure of the double points curve,
denoted by $D^2(u)$. 
They proved \cite[Theorem 4.2]{mararnunoballesteros}, that if $u$
is finitely determined, has type $\Sigma ^{1,0}$, 
and its double point curve $D^2(u)$ has $r$ real half--branches, its associated doodle
is equivalent to $\mu _r\colon [0;2\pi]\ar \R^2$ given by $\mu _r(t)=(\sin
t,\sin rt)$ if $r$ is even, or $\mu _r(t)=(\sin t,\cos rt)$ if $r$ is odd.
Moreover $u$ is topologically equivalent to the map germ
$M_r(x,y)=(x,y^2,\operatorname{Im}((x+iy)^{r+1}))$, where
$\operatorname{Im}(z)$ denotes the imaginary part of $z\in \C$ \cite[Corollary
4.5]{mararnunoballesteros}. 

If $u$ is finitely determined, has fold type, its 2--jet belongs to the orbit $(x,xy,0)$, and its double point curve $D^2(u)$ has $r$ real half--branches, then $u$ is topologically equivalent to the map germ $M_r$ \cite[Theorem 5.3]{mararnunoballesteros}. 

Moreover, if $u$ has no triple points then \cite[Lemma 4.1]{mararnunoballesteros} it has type $\Sigma ^{1,0}$. 

\bigskip
In the remainder of this section we study analytic germs which are not necessarily finitely determined.

\medskip

Assume that
$u=(u_1,u_2 ,u_3)\colon(\K ^{2},\bO) \ar (\K ^{3},\bO)$ is an analytic germ.
For $x=(x_1,x_2)$, $y=(y_1,y_2)$, $1\leqslant i \leqslant 3$, define 
\[
w_i(x,y)=u_i(x)-u_i(y),
\]
\[w=(w_1,w_2,w_3)\colon\K^2\times\K^2=\K ^4 \ar \K^3.\]
Then $w$ is also analytic, and there
exist analytic germs $c_{ik}$ such that
\[w_i(x,y)=c_{i1}(x,y)(x_1-y_1)+
c_{i2}(x,y)(x_2-y_2).\] The germs $c_{ik}$ are not uniquely determined. 

We  fix germs $c_{ik}$, and for $1\leqslant i<j \leqslant 3$ we define
\[ W_{ij}=
\left | \begin{array}{cc}
c_{i1} & c_{i2} \\
c_{j1} & c_{j2}
\end{array} \right |. \]

\begin{lemma} \label{cij(x,x)}
$c_{ik}(x,x)=\frac{\partial u_i}{\partial x_k}(x)$, and then 
$W_{ij}(x,x)=\frac{\partial(u_i,u_j)}{\partial(x_1,x_2)}(x)$.
In particular, if $u$ has a critical point at the origin then all $W_{ij}(\bO)=0$.
\end{lemma}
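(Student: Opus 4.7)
The plan is to recover the diagonal values of the $c_{ik}$ by differentiating the defining identity and evaluating at $y=x$. Starting from
\[
w_i(x,y) = c_{i1}(x,y)(x_1-y_1) + c_{i2}(x,y)(x_2-y_2),
\]
I would apply $\partial/\partial x_k$ to both sides. The left-hand side equals $\partial u_i/\partial x_k(x)$ (no $y$-dependence). On the right-hand side the product rule produces $c_{ik}(x,y)$ together with two remainder terms, each containing a factor $(x_1-y_1)$ or $(x_2-y_2)$. Setting $y=x$ annihilates these remainders and leaves $c_{ik}(x,x) = \partial u_i/\partial x_k(x)$, which is the first assertion.

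For the determinant identity, I would simply expand $W_{ij}(x,x) = c_{i1}(x,x)c_{j2}(x,x) - c_{i2}(x,x)c_{j1}(x,x)$ and substitute the formula just obtained; the right-hand side is the $2\times 2$ minor $\partial(u_i,u_j)/\partial(x_1,x_2)(x)$ by definition. The final statement is then immediate: if $u$ has a critical point at $\bO$, then $\rank D u(\bO) < 2$, so every $2\times 2$ minor of $Du(\bO)$ vanishes, and in particular each $W_{ij}(\bO)=0$.

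There is no serious obstacle here. The only conceptual point worth noting is that the germs $c_{ik}$ are not uniquely determined by the decomposition, but the differentiation argument shows that regardless of the chosen representatives, their restriction to the diagonal $\{y=x\}$ is forced to be the partial derivative of $u_i$. This also explains why the $W_{ij}$, despite depending a priori on the chosen $c_{ik}$ off the diagonal, have canonical values on the diagonal.
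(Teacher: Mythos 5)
Your proof is correct and uses essentially the same idea as the paper: differentiate the decomposition identity and evaluate on the diagonal $y=x$, where the remainder terms vanish because they carry factors of $(x_1-y_1)$ or $(x_2-y_2)$. The only cosmetic difference is that the paper differentiates with the symmetrized operator $\frac{1}{2}(\partial/\partial x_k - \partial/\partial y_k)$ while you use $\partial/\partial x_k$ directly; both choices kill the remainders on the diagonal, and yours has the small advantage that the left-hand side is immediately $\partial u_i/\partial x_k(x)$ without needing to set $y=x$.
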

\begin{proof} 
Put
\[\frac{\partial}{\partial z_k}:=\frac{1}{2}\left ( \frac{\partial}{\partial x_k}-
\frac{\partial}{\partial y_k}\right ).\]
Then
\[\frac{\partial}{\partial z_k}(x_r-y_r)=\begin{cases} 0, & k\neq r\\ 1, & k=r \end{cases}\ ,\]

\[\frac{\partial w_i}{\partial z_k}=\left (\frac{\partial c_{i1}}
{\partial z_k}(x_1-y_1)+\frac{\partial c_{i2}}
{\partial z_k}(x_2-y_2)\right )+c_{ik}\ ,\]
\[\frac{\partial w_i}{\partial z_k}(x,x)=c_{ik}(x,x).\]
On the other hand we have
\begin{align*}
\frac{\partial w_i}{\partial z_k}(x,y) & =\frac{1}{2}\left ( \frac{\partial u_i}{\partial x_k}(x)+\frac{\partial u_i}{\partial x_k}(y)\right ), \\
\frac{\partial w_i}{\partial z_k}(x,x) & =\frac{\partial u_i}{\partial x_k}(x).
\end{align*}
Thus
\[ c_{ik}(x,x)=\frac{\partial u_i}{\partial x_k}(x).\] \end{proof}

\bigskip

By Cramer's rule
\begin{equation}\label{wzKramer} (x_1-y_1)W_{ij}=
\left | \begin{array}{cc}
w_{i} & c_{i2} \\
w_{j} & c_{j2} 
\end{array} \right |, 
\qquad
(x_2-y_2)W_{ij}=
\left | \begin{array}{cc}
c_{i1} & w_{i}\\
c_{j1} & w_{j}  
\end{array} \right |. \end{equation}

\medskip

Let us assume that $u$ has an isolated critical point at the origin, i.e.
there exists a neighbourhood of the origin where the derivative matrix of $u$
has rank $2$, except at the origin. Then $u|\K ^2\setminus \{\bO\}$ is a germ of
an immersion. In particular, $u$ has an isolated singularity if the origin is an isolated
zero of the ideal generated by all $2\times 2$--minors of the derivative matrix of $u$

Notice that the set $W=w\inv (\bO)$ contains all the self--intersection points $(p,q)$ of $u$. More precisely, if
$\Delta =\{ (x,x)\ | \ x\in \K^2\}$, 
then $W\setminus \Delta$ consists of all the points $(p,q)$ and $(q,p)$, where $u(p)=u(q)$
 is the double point of $u$. 

Put $f=(f_1,\ldots,f_6)=(w_1,w_2,w_3,W_{12},W_{13},W_{23}):\K^4=\K^2\times\K^2\rightarrow \K^6$ and
\[ V  =W\cap \bigcap _{1\leqslant i<j \leqslant 3} W_{ij}\inv (\bO)=f^{-1}(\bO)=V(f_1,\ldots,f_6)\]
where by Lemma \ref{cij(x,x)}, $f(\bO)=\bO$, and then $\bO\in V$.
  
Obviously $(x,y)\in \Delta $ if and only if $x_1-y_1=x_2-y_2=0$. We have $\Delta
\subset W$, $V\setminus \Delta \subset W\setminus \Delta$. If $(x,y)\in
W\setminus \Delta$ then by (\ref{wzKramer}) all $W_{ij}(x,y)=0$, 
so 
\begin{equation} \label{ab} V\setminus \Delta = W\setminus \Delta .\end{equation}

\begin{lemma} \label{u_imm} $V\cap \Delta=\{ \bO\}$ as germs of sets if and only if $u$ has an isolated
 critical point at the origin.
\end{lemma}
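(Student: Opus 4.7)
The statement should follow directly from the identification of $W_{ij}(x,x)$ with Jacobian minors provided by Lemma~\ref{cij(x,x)}. Here is the plan.

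First, I would observe that $\Delta\subset W$, since $w(x,x)=u(x)-u(x)=\bO$, so every point on the diagonal automatically satisfies $w_1=w_2=w_3=0$. Consequently,
\[
V\cap\Delta \;=\; \Delta\cap\bigcap_{1\leq i<j\leq 3}W_{ij}^{-1}(\bO),
\]
that is, $V\cap\Delta$ consists exactly of those $(x,x)\in\Delta$ for which $W_{12}(x,x)=W_{13}(x,x)=W_{23}(x,x)=0$.

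Next, I would invoke Lemma~\ref{cij(x,x)}, which gives
\[
W_{ij}(x,x)=\frac{\partial(u_i,u_j)}{\partial(x_1,x_2)}(x).
\]
The three numbers $W_{12}(x,x),W_{13}(x,x),W_{23}(x,x)$ are precisely the three $2\times 2$ minors of the derivative matrix $Du(x)$. Therefore, under the obvious identification $\Delta\cong\K^2$ via $(x,x)\mapsto x$, the set $V\cap\Delta$ corresponds to
\[
\{x\in\K^2\ |\ \operatorname{rank}\,Du(x)<2\},
\]
which is exactly the critical set of $u$.

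From this identification the equivalence is immediate: $V\cap\Delta=\{\bO\}$ as a germ at $\bO$ if and only if $\bO$ is an isolated point of the critical set of $u$, i.e.\ $u$ has an isolated critical point at the origin. There is really no obstacle to overcome here; the only care needed is to keep track that the identification of $V\cap\Delta$ with the critical locus is one of germs at $\bO$, so the ``isolated'' condition on one side matches the ``$=\{\bO\}$ as germs of sets'' condition on the other.
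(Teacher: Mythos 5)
Your proof is correct and follows exactly the same route as the paper: noting $\Delta\subset W$ and then using Lemma~\ref{cij(x,x)} to identify the $W_{ij}(x,x)$ with the $2\times 2$ minors of $Du(x)$, so that $V\cap\Delta$ is the critical locus of $u$. You simply spell out the steps a bit more explicitly.
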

\begin{proof}  Let $Du(x)$ denote the derivative matrix of $u$ at $x$. By Lemma
\ref{cij(x,x)}, for $(x,x)\in \Delta$ the determinant $W_{ij}(x,x)$ equals some
$2\times 2$--minor of the derivative matrix $Du(x)$. As $\Delta \subset W$, the
origin is an isolated critical point of $u$ if and only if it is an isolated
point of  $V\cap \Delta$. \end{proof} 

\bigskip

By the previous lemma and (\ref{ab}) we get
\begin{cor} \label{int_set}
If $u$ has an isolated critical point at the origin then $V\setminus
\{\bO\}=f\inv (\bO)\setminus \{\bO\}$ is the germ of
the set of self--intersection points  of $u$, and $D^2(u)=\{u(p)\ |\ \ (p,q)\in V\}$. 
\end{cor}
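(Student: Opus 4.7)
The plan is to assemble the corollary directly from Lemma \ref{u_imm} and the equality \eqref{ab} already established, so very little new work is required; the task is mainly to keep track of the set-theoretic bookkeeping (germs, diagonal versus off-diagonal, image versus preimage).

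First I would note the decomposition $V\setminus\{\bO\}=(V\setminus\Delta)\sqcup\bigl((V\cap\Delta)\setminus\{\bO\}\bigr)$. The hypothesis that $u$ has an isolated critical point at the origin lets me invoke Lemma \ref{u_imm} to conclude that $V\cap\Delta=\{\bO\}$ as germs at $\bO$, so the second term in the decomposition is empty and one obtains $V\setminus\{\bO\}=V\setminus\Delta$. Then \eqref{ab} identifies this with $W\setminus\Delta$, and by the very definition of $W=w^{-1}(\bO)$ this is precisely the set of pairs $(p,q)\in\K^2\times\K^2$ with $p\neq q$ and $u(p)=u(q)$, i.e.\ the set of self-intersection pairs of $u$. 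This gives the first assertion.

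For the second assertion, the double point curve of $u$ in $\K^3$ is by definition the image under $u$ of the set of self-intersection pairs, and its closure (which is $D^2(u)$) is obtained by adjoining the limit point $u(\bO)=\bO$. Since the set $\{u(p)\ |\ (p,q)\in V\}$ equals $\{u(p)\ |\ (p,q)\in V\setminus\Delta\}\cup\{u(\bO)\}$, the first piece is exactly the double point curve by the previous paragraph, and appending $u(\bO)=\bO$ produces its closure. Hence $D^2(u)=\{u(p)\ |\ (p,q)\in V\}$ as germs at the origin.

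The only mild subtlety, and the place where one must be careful rather than encounter a real obstacle, is that the equality \eqref{ab} was proved without any assumption on $u$, so that the isolated-critical-point hypothesis is used solely through Lemma \ref{u_imm} to kill the diagonal contribution to $V$; one should verify that taking germs at $\bO$ commutes with the manipulations above, which is routine since all the sets involved are analytic and contain $\bO$.
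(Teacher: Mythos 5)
Your proof is correct and follows essentially the same route the paper intends: the corollary is stated there as an immediate consequence of Lemma \ref{u_imm} (which kills $V\cap\Delta$ away from $\bO$ under the isolated-critical-point hypothesis) and the identity \eqref{ab} ($V\setminus\Delta=W\setminus\Delta$), which are exactly the two facts you combine. Your added remarks on passing to germs and on the closure of the off-diagonal image are routine and consistent with the paper's conventions.
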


We shall say that a self--intersection  $(p,q)$, where $u(p)=u(q)$, is \emph{transverse}, if
\[ Du(p)\K^2+Du(q)\K^2=\K^3. \]
Since
\begin{align*}Dw(x,y)&= 
\left [
\begin{matrix}
\frac{\partial u_1}{\partial x_1}(x) & \frac{\partial u_1}{\partial x_2}(x) & -\frac{\partial u_1}{\partial x_1}(y) &
-\frac{\partial u_1}{\partial x_2}(y) \\
\frac{\partial u_2}{\partial x_1}(x) & \frac{\partial u_2}{\partial x_2}(x) & -\frac{\partial u_2}{\partial x_1}(y) &
-\frac{\partial u_2}{\partial x_2}(y) \\
\frac{\partial u_3}{\partial x_1}(x) & \frac{\partial u_3}{\partial x_2}(x) & -\frac{\partial u_3}{\partial x_1}(y) &
-\frac{\partial u_3}{\partial x_2}(y)
\end{matrix} \right ] \\
&=
\left [ Du(x)\ |\ -Du(y) \right ],
\end{align*}
 a self--intersection $(p,q)$ in $V\setminus \{ \bO\}$ is transverse if and only if $\rank Dw(p,q)=3$.

Let $\Opis_{K,\bO}$ denote the ring of germs of analytic functions at $\bO\in\K^4=\K^2\times\K^2$.

\begin{cor} \label{transv_inter} The germ $u$ has only transverse
self--intersections if and only if $\rank Dw(p,q)=3$  for each $(p,q)\in V\setminus\{\bO\}$.

In particular, if the ideal in $\Opis_{K,\bO}$ generated by $f_1,\ldots,f_6$ and all
$3\times 3$--minors of $Dw$ has an isolated zero at the origin, then $u$
has only transverse self--intersections near the origin.
\end{cor}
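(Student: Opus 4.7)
The plan is to exploit the block form $Dw(x,y) = [Du(x)\mid -Du(y)]$ displayed just above the statement. Since negating columns does not alter the column space, the column span of $Dw(p,q)$ equals $Du(p)\K^2 + Du(q)\K^2$. Hence $\rank Dw(p,q)=3$ is equivalent, by the very definition of transversality, to $(p,q)$ being a transverse self--intersection. This gives both directions of the first equivalence once we know that the points of $V\setminus\{\bO\}$ are genuine self--intersection pairs.

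For the latter, I would invoke Corollary \ref{int_set}: under the assumption that $u$ has an isolated critical point at $\bO$, the set $V\setminus\{\bO\}$ coincides with $W\setminus\Delta$, i.e.\ with the set of pairs $(p,q)$ with $u(p)=u(q)$ and $p\neq q$. A small consistency check takes care of the diagonal: if $(p,p)\in V\setminus\{\bO\}$ were to occur, then $\rank Dw(p,p)=\rank Du(p)\leqslant 2<3$, so the rank hypothesis on all of $V\setminus\{\bO\}$ automatically forces $V\cap\Delta=\{\bO\}$, which by Lemma \ref{u_imm} is exactly the statement that $u$ has an isolated critical point. Thus one may work interchangeably with ``all self--intersections transverse'' and ``$\rank Dw=3$ on $V\setminus\{\bO\}$''.

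For the second assertion (the ideal criterion), let $I\subset\Opis_{K,\bO}$ be the ideal generated by $f_1,\ldots,f_6$ together with all $3\times 3$--minors of $Dw$. Its zero set near $\bO$ is precisely
\[
V\cap\{(x,y)\ |\ \rank Dw(x,y)<3\}.
\]
If this reduces to $\{\bO\}$ in some neighbourhood, then for every $(p,q)\in V\setminus\{\bO\}$ sufficiently close to $\bO$ we have $\rank Dw(p,q)\geqslant 3$, and since $Dw$ is a $3\times 4$ matrix this gives $\rank Dw(p,q)=3$. Applying the first equivalence concludes that all self--intersections near $\bO$ are transverse.

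The main (essentially the only) conceptual point is the identification of the column span of $Dw(p,q)$ with $Du(p)\K^2+Du(q)\K^2$; the rest is bookkeeping separating the diagonal from the off--diagonal part of $V$, which I do not anticipate as a real obstacle.
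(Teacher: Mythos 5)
Your proposal is correct and follows essentially the same route the paper takes (the paper leaves this corollary without an explicit proof, since the block form $Dw(x,y)=[\,Du(x)\mid -Du(y)\,]$ and the sentence immediately above it already give the rank--transversality equivalence, while Corollary~\ref{int_set} identifies $V\setminus\{\bO\}$ with the self--intersection pairs). Your diagonal consistency check --- that $\rank Dw(p,p)=\rank Du(p)<3$ forces $V\cap\Delta=\{\bO\}$ and hence, via Lemma~\ref{u_imm}, an isolated critical point --- is a pleasant extra observation, but is not strictly needed here because the whole paragraph preceding the corollary is carried out under the standing hypothesis ``$u$ has an isolated critical point at the origin.''
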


\bigskip

\begin{prop} \label{curve}
If the  germ $u:\K^2,\bO\rightarrow\K^3,\bO$ has an isolated critical point at $\bO$ and has only
transverse self-intersections, then $V=V(f_1,\ldots,f_6)\subset\K^4$ is a curve having
an isolated singularity at the origin, i.e. the origin is isolated in 
$\{(p,q)\in V\ |\ \rank Df(p,q)<3\}$.

If that is the case and $\K=\R$ then $V$ is an union of a finite collection of half--branches.
\end{prop}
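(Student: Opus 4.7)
The plan is to verify that $\rank Df(p,q)\geq 3$ at every $(p,q)\in V\setminus\{\bO\}$ in some neighbourhood of the origin; this simultaneously yields the isolated-singularity condition and the dimension bound $\dim V\leq 1$ required for $V$ to be a curve with an isolated singularity.

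For the rank bound, fix $(p,q)\in V\setminus\{\bO\}$ near $\bO$. The hypothesis that $u$ has an isolated critical point together with Lemma \ref{u_imm} gives $V\cap\Delta=\{\bO\}$ as germs, so $p\neq q$. The identity (\ref{ab}) then places $(p,q)$ in $W\setminus\Delta$, meaning $u(p)=u(q)$ is a genuine self-intersection of $u$. The transversality hypothesis combined with Corollary \ref{transv_inter} yields $\rank Dw(p,q)=3$, and since $Dw$ is the top three rows of the $6\times 4$ matrix $Df$, we conclude $\rank Df(p,q)\geq 3$. This shows that the origin is isolated in the rank-drop locus.

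Once three independent rows of $Df(p,q)$ are identified, the implicit function theorem realizes $V$ near $(p,q)$ as a subset of an analytic submanifold of dimension $4-3=1$. Hence every irreducible component of the germ of $V$ at $\bO$ has dimension at most $1$, so $\dim V\leq 1$, and $V$ is a curve (or reduces to $\{\bO\}$, which is the degenerate case allowed by the paper's convention). This completes the verification that $V$ is a curve with an isolated singularity at the origin.

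For the final assertion when $\K=\R$: the germ $V\subset\R^4$ is a real analytic set of dimension at most $1$, so by the standard structure theorem for one-dimensional semianalytic germs it decomposes in a neighbourhood of the origin into a finite union of half-branches emanating from $\bO$. The main delicacy is the real-versus-complex bookkeeping, since the paper's definition of isolated singularity is phrased over $\C$: when $\K=\R$ the rank equality for $Dw$ should be applied to the complexification of $u$, but this extension is routine because the real rank condition defining transversality propagates analytically, and everything beyond it is a direct consequence of the containment of $Dw$ inside $Df$ together with elementary local analytic geometry.
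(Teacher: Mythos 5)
Your argument is correct and follows essentially the same route as the paper's own proof: identify $V\setminus\{\bO\}$ with the self--intersection locus (via Lemma \ref{u_imm} and (\ref{ab}), i.e.\ Corollary \ref{int_set}), use transversality to get $\rank Dw(p,q)=3$, and observe that $Dw$ sits inside $Df$ so $\rank Df(p,q)\geq 3$. The extra details you supply (the implicit-function-theorem step giving $\dim V\leq 1$ and the structure theorem for one-dimensional semianalytic germs) are exactly what the paper leaves implicit, so no substantive difference.
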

\begin{proof}  By Corollary \ref{int_set}, $V$ is the set of self--intersections of $u$.

In some neighbourhood of the origin the rank of the matrix $Dw(p,q)$ equals $3$ at self--intersection points $(p,q)$, so the rank of the matrix $Df(p,q)$ is greater or equal to $3$. Hence $V$ is a curve having an isolated singularity at the origin. \end{proof}

\bigskip

Now we shall show how to check that $u$ has no triple points, i.e. such points $(p,q,r)$ that $p\neq q \neq r$ and $u(p)=u(q)=u(r)$.

\bigskip

Let us define 
$s\colon \K^6 \ar \K^6$, $s(x,y,z)=(w(x,y),w(y,z))$,
where $(x,y,z)=(x_1,x_2,y_1,y_2,z_1,z_2)\in \K ^6$, and let
\[\Sigma=\{ (x,y,z)\in \K ^6 \ |\ x=y\vee x=z \vee y=z \}.\]
Then $s\inv (\bO)\setminus \Sigma$ consists of such points $(p,q,r)$  that $u(p)=u(q)=u(r)$ is a  triple point of $u$. For $i=1,2,3$ we have
\begin{align*}
w_i(x,y)&= c_{i1}(x,y)(x_1-y_1)+
c_{i2}(x,y)(x_2-y_2)\\
w_i(y,z)&= c_{i1}(y,z)(y_1-z_1)+
c_{i2}(y,z)(y_2-z_2)\\
w_i(x,z)&=c_{i1}(x,z)(x_1-z_1)+
c_{i2}(x,z)(x_2-z_2).
\end{align*}
Define
\[ \tilde{V}=\{ (x,y,z)\in s\inv (\bO)\ |\ (x,y)\in V \wedge (y,z)\in V \wedge (x,z)\in V\}.\]
Applying the same arguments as above we obtain
\begin{equation} \label{ab2}
s\inv (\bO)\setminus \Sigma = \tilde{V}\setminus \Sigma\ .
\end{equation}

\begin{lemma} \label{tp_set} If $u$ has an isolated critical point at the origin
 then $\tilde{V}\cap \Sigma=\{ \bO\}$ as germs of sets.
\end{lemma}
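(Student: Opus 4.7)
The plan is to unwind the definitions and argue by cases on which pair of coordinates in $(x,y,z)$ coincide. By the evident symmetry of the defining conditions for $\tilde V$ in $x,y,z$, it suffices to treat the case $x=y$; the cases $x=z$ and $y=z$ are handled identically after relabelling.

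First I would observe that if $(x,y,z)\in\tilde V$ with $x=y$, then $(x,y)=(x,x)\in V\cap\Delta$. Lemma \ref{u_imm} then immediately forces $x=y=\bO$, since that lemma identifies the germ $V\cap\Delta$ with $\{\bO\}$ under the isolated-critical-point hypothesis.

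Next I would exploit the remaining condition $(x,z)\in V$, which at this stage reads $(\bO,z)\in V$. If $z\neq\bO$, then $(\bO,z)\in V\setminus\Delta$, and the equality $V\setminus\Delta=W\setminus\Delta$ recorded in (\ref{ab}) gives $w(\bO,z)=\bO$. By definition of $w$ this means $u(z)=u(\bO)=\bO$, so $z\in u^{-1}(\bO)$.

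The remaining step---the one I expect to be the main obstacle---is to rule out such a $z\neq\bO$ arbitrarily close to $\bO$; equivalently I must show $u^{-1}(\bO)=\{\bO\}$ as germs at $\bO$. The plan here is to argue that $u^{-1}(\bO)$ is an analytic germ, so if it were not the isolated point $\{\bO\}$ it would contain a non-constant analytic curve germ $\gamma\colon(\K,0)\to(\K^2,\bO)$. Differentiating the identity $u\circ\gamma\equiv\bO$ yields $Du(\gamma(t))\cdot\gamma'(t)=0$; for small $t\neq0$, the point $\gamma(t)$ is a nonzero point near $\bO$ at which $Du$ has nontrivial kernel, so $\rank Du(\gamma(t))<2$. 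This contradicts the assumption that $\bO$ is an isolated critical point of $u$. Hence $z=\bO$, so $(x,y,z)=\bO$, which completes the proof. Once this final step is available, everything else is bookkeeping via Lemma \ref{u_imm} and the set-theoretic identity (\ref{ab}) established earlier.
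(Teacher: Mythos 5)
Your proof is correct and follows essentially the same route as the paper's: reduce each coincidence $x=y$, $x=z$, $y=z$ to the fact that $V\cap\Delta=\{\bO\}$ (Lemma \ref{u_imm}, which the paper re-derives inline via the minors $W_{ij}(x,x)$), and then kill the remaining free variable by observing it must lie in $u^{-1}(\bO)$, which is $\{\bO\}$. The only difference is that the paper asserts without proof that $\bO$ is isolated in $u^{-1}(\bO)$, whereas you supply the missing justification via curve selection and differentiation of $u\circ\gamma\equiv\bO$ — a correct and welcome addition.
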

\begin{proof}  By Lemma \ref{cij(x,x)}, the determinants $W_{ij}(x,x)$ are the 
$2\times 2$--minors of the derivative matrix $Du(x)$. 
For each $x\neq \bO$ close to $\bO$ there exists 
 $W_{ij}(x,x)\neq 0$. Then for each $t\in \K^2$ neither $(x,x,t)$ nor $(x,t,x)$ nor $(t,x,x)$
belongs to $\tilde{V}$. If points $(0,0,t)$, $(0,t,0)$, $(t,0,0)$ belong to
$\tilde{V}$ then $t\in u\inv (\bO)$, but $\bO$ is isolated in the zeroes
set of $u$. So we obtain the equality of germs $\tilde{V}\cap \Sigma=\{ \bO\}$. \end{proof} 

\bigskip

Let 
\[
\begin{array}{lll}
			S_1(x,y,z)=w_1(x,y) & S_2(x,y,z)=w_2(x,y) & S_3(x,y,z)=w_3(x,y) \\
			S_4(x,y,z)=w_1(y,z) & S_5(x,y,z)=w_2(y,z) & S_6(x,y,z)=w_3(y,z) \\
			S_7(x,y,z)=W_{12}(x,y) & S_8(x,y,z)=W_{13}(x,y) & S_9(x,y,z)=W_{23}(x,y) \\
			S_{10}(x,y,z)=W_{12}(y,z) & S_{11}(x,y,z)=W_{13}(y,z) & S_{12}(x,y,z)=W_{23}(y,z) \\
			S_{13}(x,y,z)=W_{12}(x,z) & S_{14}(x,y,z)=W_{13}(x,z) & S_{15}(x,y,z)=W_{23}(x,z). 
\end{array}
\]
If $u$ has an isolated critical point at the origin then $\tilde{V}\setminus \Sigma = \tilde{V}\setminus \{\bO\}=
V(S_1,\ldots,S_{15})$ is the set of triple points.

\medskip

Let us denote by $\K\{x,y,z\}$ the ring of convergent power series in variables $x_1,x_2,y_1,y_2,z_1,z_2$.
We get

\begin{cor} \label{triple}
If $u$ has an isolated singular point at the origin and
\[\dim_K \K\{x,y,z\}/\langle S_1,\ldots,S_{15} \rangle<+\infty,\]
then $u$ has no  triple points.

Hence, if the dimension is infinite then $u$ may have triple points. 
\end{cor}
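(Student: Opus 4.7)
The plan is to read off the conclusion from the preceding machinery without any fresh computation. First, I would invoke Corollary \ref{nr10}: the finite-dimensionality hypothesis
\[\dim_K \K\{x,y,z\}/\langle S_1,\ldots,S_{15}\rangle<\infty\]
is equivalent to $V_C(S_1,\ldots,S_{15})\subset\{\bO\}$ in some neighbourhood of the origin in $\C^6$. (The equivalence holds for either $\K=\R$ or $\K=\C$, since the $S_i$ admit real-analytic representatives, so by Proposition \ref{nr101}(i) the diagram $\Npis$ and hence the quotient dimension is the same over both fields.) In particular, the real zero set $\tilde V=V(S_1,\ldots,S_{15})$ is also reduced to $\{\bO\}$ as a germ.

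Next, I would recall the two structural facts already established earlier in the section. By the identity (\ref{ab2}), $s\inv(\bO)\setminus\Sigma=\tilde V\setminus\Sigma$, so the set of triple points of $u$ is exactly $\tilde V\setminus\Sigma$. By Lemma \ref{tp_set}, the hypothesis that $u$ has an isolated singular point at the origin forces $\tilde V\cap\Sigma=\{\bO\}$ as germs, which yields $\tilde V\setminus\Sigma=\tilde V\setminus\{\bO\}$.

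Combining the two pieces: since $\tilde V\subset\{\bO\}$ as a germ, the set $\tilde V\setminus\{\bO\}$ is empty in some neighbourhood of the origin, hence $u$ has no triple points there. The second sentence of the statement is just the contrapositive remark that if the dimension is infinite, this test is inconclusive — no proof is needed.

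I do not expect any real obstacle; the result is essentially a direct concatenation of Lemma \ref{tp_set}, the identity (\ref{ab2}), and the dictionary of Corollary \ref{nr10} between ideal codimension and the complex zero set. The only point worth flagging is the passage from the $\K$-dimensional hypothesis to a statement about $V_C$, which is handled cleanly by noting that the $S_i$ are real-analytic so that $\Npis$ is field-independent.
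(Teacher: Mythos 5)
Your proof is correct and is precisely the argument the paper intends (the corollary is stated with the phrase "We get," the implicit proof being the concatenation of Corollary \ref{nr10}, equation (\ref{ab2}), and Lemma \ref{tp_set} that you spell out). The only elaboration you add — observing that the finite-dimensionality test is field-independent because the $S_i$ are real-analytic, via Proposition \ref{nr101}(i) — is a correct and worthwhile clarification of why the hypothesis over $\K=\R$ still controls the complex zero set.
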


\begin{theorem}\label{doublenumber}
Assume that an analytic germ $u:\R^2,\bO\rightarrow\R^3,\bO$ has an isolated critical point at $\bO$,
all self--intersections are transverse, and there are no triple points.

Then $V=V(w_1,w_2,w_3,W_{12},W_{13},W_{23})$ is a curve having an isolated singular point at the origin,
so that $V$
is an union of a finite collection of half--branches.
Each half--branch in the set of double points $D^2(u)$ is represented by two half--branches in $V$. 
\end{theorem}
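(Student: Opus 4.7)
The plan is to split the theorem into its two assertions. The first one --- that $V=V(w_1,w_2,w_3,W_{12},W_{13},W_{23})$ is a curve with an isolated singular point at the origin and hence a finite union of half-branches --- is precisely the content of Proposition \ref{curve}, applied under the present hypotheses (isolated critical point and transverse self-intersections). So no new argument is needed for that part. The substance of the theorem lies in the factor of two in the correspondence between half-branches of $V$ and half-branches of $D^2(u)$ under the projection $(p,q)\mapsto u(p)$.

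My main tool would be the swap involution $\sigma\colon\R^4\to\R^4$, $\sigma(x,y)=(y,x)$. Since $u(x)=u(y)$ is equivalent to $u(y)=u(x)$, and since $V\setminus\{\bO\}=W\setminus\Delta$ by \eqref{ab}, one gets $\sigma(V)=V$. By Lemma \ref{u_imm} the fixed locus of $\sigma|_V$ is $V\cap\Delta=\{\bO\}$, so $\sigma$ acts freely on $V\setminus\{\bO\}$, and consequently it permutes the finite family of half-branches $B_1,\ldots,B_N$ of $V$. A $\sigma$-fixed half-branch $B_i$ would restrict $\sigma$ to a continuous self-involution of $B_i\cong[0,\varepsilon)$ fixing the endpoint $\bO$; such a monotone involution must be the identity, forcing $B_i\subset\mathrm{Fix}(\sigma)=\Delta$, contradicting $V\cap\Delta=\{\bO\}$. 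Hence $\sigma$ pairs up the $B_i$ into $N/2$ orbits $\{B_i,\sigma(B_i)\}$.

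The next step is to analyse the projection $\pi\colon V\setminus\{\bO\}\to\R^3$, $\pi(p,q)=u(p)=u(q)$. By Corollary \ref{int_set} its image is precisely the set of double points of $u$, whose closure is $D^2(u)$. The no-triple-points hypothesis means each double point has exactly the two preimages $(p,q)$ and $(q,p)$ under $\pi$, i.e. one full $\sigma$-orbit. Thus $\pi$ is a $2$-to-$1$ surjection onto the set of double points, whose fibres are the $\sigma$-orbits. Because $u$ is an analytic immersion on $\R^2\setminus\{\bO\}$ (isolated critical point hypothesis) and its self-intersections are transverse, $\pi$ is a local analytic homeomorphism on $V\setminus\{\bO\}$. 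Hence each $\sigma$-orbit $\{B_i,\sigma(B_i)\}$ descends under $\pi$ to a single analytic arc emanating from $\bO$ in $D^2(u)$, i.e. to a single half-branch of $D^2(u)$; and conversely every half-branch of $D^2(u)$ lifts to such a pair. This yields the asserted correspondence --- each half-branch of $D^2(u)$ is represented by exactly two half-branches of $V$.

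The step I expect to be most delicate is the last one: verifying that the image under $\pi$ of a pair $\{B_i,\sigma(B_i)\}$ really is one honest half-branch of $D^2(u)$ (an analytic arc with one endpoint at $\bO$), rather than a more complicated configuration that mixes or breaks up half-branches. The ingredients are the immersion property of $u$ away from $\bO$, the $2$-to-$1$ property of $\pi$ with $\sigma$-orbit fibres, and the structure theorem for $1$-dimensional semianalytic germs, which together yield an analytic parametrization of the image arc.
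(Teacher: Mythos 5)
The paper gives no explicit proof of Theorem \ref{doublenumber}, treating it as a direct consequence of Proposition \ref{curve}, Corollary \ref{int_set}, and Lemma \ref{u_imm}; your argument correctly fills in the details, including the swap-involution pairing and the local-homeomorphism check on $\pi=u\circ\mathrm{pr}_1$. The step you rightly flag as delicate goes through exactly as you expect: $\mathrm{pr}_1|_{V\setminus\{\bO\}}$ is a local diffeomorphism because the tangent line to $V$ at $(p,q)$ projects injectively to the first factor (any $(0,v')\in\ker Dw(p,q)$ has $Du(q)v'=0$, hence $v'=0$ by the immersion hypothesis), $u$ is a local diffeomorphism onto its image away from $\bO$, and the no-triple-point hypothesis forces distinct $\sigma$-orbits of half-branches of $V$ to have disjoint images in $D^2(u)$, so each orbit yields one honest half-branch.
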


Now we shall present three examples. In all the examples:
\begin{itemize}
\item $f=(f_1,\ldots ,f_6)=(w_1,w_2,w_3,W_{12},W_{13},W_{23})\colon (\R ^4,\bO)\ar (\R ^6,\bO)$
\item $I_2$ is the ideal generated by $f_i$'s and all $3\times 3$--minors of $D(f_1,\ldots ,f_6)$
\item $J=\left<f_1,\ldots ,f_6\right>$
\item $I_3$ is the ideal generated by $g_i$'s and all $3\times 3$--minors of $D(g_1,g_2,g_3)$
\item $J_1=\left<g_1,g_2,g_3,h\right>$
\item $J_2=\left<g_1,g_2,g_3,h^2\right>$
\item $H_+$ and $H_-$ are as in the Theorem \ref{nr18}.
\end{itemize}
\begin{ex}
Let $u=(x_1,x_1 x_2+x_2^3,x_1x_2^2+\frac{9}{10}x_2^4)$. 
Then $u$ has an isolated critical point at the origin.
First let us compute the germs $c_{ij}$:

\noindent
$c_{11}=1$\\ 
$c_{12}=0$\\ 
$c_{21}=x_2$\\ 
$c_{22}=y_1+x_2^2+x_2y_2+y_2^2$\\ 
$c_{31}=x_2^2$\\ $c_{32}=x_2y_1+y_1y_2+\frac{9}{10}x_2^3+
\frac{9}{10}x_2^2y_2+\frac{9}{10}x_2y_2^2+\frac{9}{10}y_2^3$

\noindent
and the determinants $W_{ij}$:

\noindent
$W_{12}=y_1+x_2^2+x_2y_2+y_2^2$\\ 
$W_{13}=x_2y_1+y_1y_2+\frac{9}{10}x_2^3+\frac{9}{10}x_2^2y_2+
\frac{9}{10}x_2y_2^2+\frac{9}{10}y_2^3$\\ $W_{23}=x_2y_1y_2-
\frac{1}{10}x_2^4-\frac{1}{10}x_2^3y_2-\frac{1}{10}x_2^2y_2^2+\frac{9}{10}x_2y_2^3$

\medskip

We have
 $\Bpis (I_2)=\{ (1,0,0,0),(0,0,1,0),(0,2,0,0),(0,1,0,1),(0,0,0,3)\}$.
By Proposition \ref{nr13}, $V_C(f_1,\ldots,f_6)$ is a curve having an isolated
singular point at the origin.
We have $\Bpis (J)=\{ (1,0,0,0),(0,0,1,0),(0,3,0,0)\}$.

Take $g_1=f_1+f_6$, $g_2=f_2+f_5$, $g_3=f_3+f_4$. 
We have\\ $\Bpis (I_3)=\{ (1,0,0,0),(0,0,1,0),(0,2,0,0),(0,1,0,1),(0,0,0,3)\}$,
so that the curve $V_C(g_1,g_2,g_3)$ has an isolated singularity at the origin. 

Take $h=f_1$. We may compute
$\Bpis (J_1)=\{ (1,0,0,0),(0,0,1,0),(0,3,0,0) \}$,
$\Bpis (J_2)=\{ (1,0,0,0),(0,0,1,0),(0,3,0,0) \}$. 
We have $J_1\subset J$ and $\Npis (J)\setminus \Npis (J_1)=\emptyset$,
 so that $\dim _R (J/J_1)=0$. By Lemma \ref{nr16}, $V_C(f_1,\ldots ,f_6)=V_C(g_1,g_2,g_3,h)$. 
As $\Bpis (J_1)=\Bpis (J_2)$, then $\xi=1$ and $k=2>1$.
We compute
\[ \deg _0(H_+)=3, \quad \deg _0(H_-)=-3.\]

Applying Proposition \ref{transv_inter} and Corollary \ref{triple}
we may check that all the self--intersections are transverse
and $u$ has no triple points.

According to Theorems \ref{nr18}, \ref{doublenumber},
there are $3$ half-branches in the set of double points $D^2(u)$.
\end{ex}

\begin{ex}
Let $u=(x_1^2-2x_2^2,x_1x_2+x_1^3,x_1x_2-x_2^3)$. Then $u$ has an isolated critical point at the origin.
We have\\ 
$\Bpis (I_2)=\{ (2,0,0,0),(1,1,0,0),(0,2,0,0),(1,0,2,0),(0,1,2,0),(0,0,3,0),$\\ $(1,0,1,2),(0,1,1,2),(0,0,2,2),(1,0,0,3),(0,1,0,3),(0,0,1,3),(0,0,0,5)\}$.
By Proposition \ref{nr13}, $V_C(f_1,\ldots,f_6)$ is a curve having an isolated singularity at
the origin.
We have\\
$\Bpis (J)=\{ (2,0,0,0),(1,1,0,0),(0,2,0,0),(1,0,2,0),(0,1,2,0),(0,0,3,0)\}$.

Take $g_1=f_1-3f_2+f_6$, $g_2=f_2-2f_5$, $g_3=f_3+3f_4+f_6$. We may compute\\
$\Bpis (I_3)=\{ (2,0,0,0),(1,1,0,0),(0,2,0,0),(1,0,3,0),(0,1,3,0),(0,0,4,0),$\\ $(1,0,2,1),(0,1,2,1),(0,0,3,1),(1,0,1,3),(0,1,1,3),(0,0,2,3),(1,0,0,5),$\\ $(0,1,0,5),(0,0,1,5),(0,0,0,6)\}$,
so that $V_C(g_1,g_2,g_3)$ has an isolated singularity at the origin. 

Take $h=f_1$. We may compute\\ 
$\Bpis (J_1)=\{ (2,0,0,0),(1,1,0,0),(0,2,0,0),(1,0,2,0),(0,1,3,0),(0,0,4,0),$\\ $(0,1,2,1),(0,0,3,1) \}$,\\
$\Bpis (J_2)=\{(2,0,0,0),(1,1,0,0),(0,2,0,0),(1,0,3,0),(0,1,3,0),(0,0,5,0),$\\ $(0,0,4,2),(1,0,2,4),(0,1,2,4),(0,0,3,4) \}$.

We have $J_1\subset J$ and the set $\Npis (J)\setminus \Npis (J_1)$ is finite, so that $\dim _R (J/J_1)<\infty$.
 Then $V_C(f_1,\ldots ,f_6)=V_C(g_1,g_2,g_3,h)$ (see Lemma 3.3). 
We have\\ $\max _{\beta \in \Npis (J_1)\setminus \Npis (J_2)}\{ |\beta|\}=6$,
 $\min _{\alpha \in \Bpis (J_1)\setminus \Npis (J_2)}\{ |\alpha|\}=3$, and we take $\xi=1+6-3=4$ and $k=6>4$.
We compute
\[ \deg _0(H_+)=1, \quad \deg _0(H_-)=-1.\]

Applying Proposition \ref{transv_inter} and Corollary \ref{triple}
we may check that all the self--intersections are transverse
and $u$ has no triple points.

According to Theorems \ref{nr18}, \ref{doublenumber}, there is one half-branche
in the set of double points $D^2(u)$.
\end{ex}

\begin{ex}
Let $u=(x_1,x_1x_2+x_2^3,x_1x_2^2+x_2^4)$. Then $u$ has an isolated critical point at the origin.

Applying Proposition \ref{transv_inter} and Corollary \ref{triple}
we may check that all the self--intersections are transverse,
and the germ $u$ may have triple points.

We have
$\Bpis (I_2)=\{ (1,0,0,0),(0,0,1,0),(0,2,0,0),(0,1,0,1),(0,0,0,3)\}$.
By Proposition \ref{nr13}, $V_C(f_1,\ldots, f_6)$ is a curve having an isolated
singularity at the origin.
We may compute 
$\Bpis (J)=\{ (1,0,0,0),(0,0,1,0),(0,2,0,1)\}$.

Take $g_1=f_1+f_6$, $g_2=f_2+f_5$, $g_3=f_3+f_4$, $h=f_1$.
As above, we may check that
$V_C(g_1,g_2,g_3)$ has an isolated singularity at the origin. 

Take $h=f_1$. We may compute
$\Bpis (J_1)=\{ (1,0,0,0),(0,0,1,0),(0,2,0,1) \}$, 
$\Bpis (J_2)=\{ (1,0,0,0),(0,0,1,0),(0,2,0,1)\}$.
We have $J_1\subset J$ and $\Npis (J)\setminus \Npis (J_1)=\emptyset$,
 so that $\dim _R (J/J_1)=0$. By Lemma \ref{nr16}, $V_C(f_1,\ldots ,f_6)=V_C(g_1,g_2,g_3,h)$.
 As 
$\Bpis (J_1)=\Bpis (J_1)$,
then $\xi=1$ and $k=2>1$.
We compute
\[ \deg _0(H_+)=3, \quad \deg _0(H_-)=-3.\]
According to Theorem \ref{nr18}, there are $6$ half-branches in the set 
of self--intersection points of $u$.
As $u$ may have triple points, we may only conclude that there are at most 3
half--branches in $D^2(u)$. In fact, the image $u(\R^2)$ contains  one half--branch
consisting of triple points
(see \cite{mararnunoballesteros}).
\end{ex}

\end{document}